\newtheorem{thm}{Theorem}[section]
\theoremstyle{definition}
\newtheorem{lem}[thm]{Lemma}
\newtheorem{prop}[thm]{Proposition}
\newtheorem{cor}[thm]{Corollary}
\newtheorem{defn}[thm]{Definition}
\newtheorem*{brmk}{Remark}
\newcommand{\bbH}{\mathbb{H}}
\newcommand{\bbN}{\mathbb{N}}
\newcommand{\bbR}{\mathbb{R}}
\newcommand{\bbZ}{\mathbb{Z}}
\newcommand{\cA}{\mathcal{A}}
\newcommand{\cB}{\mathcal{B}}
\newcommand{\cC}{\mathcal{C}}
\newcommand{\cH}{\mathcal{H}}
\newcommand{\cI}{\mathcal{I}}
\newcommand{\cN}{\mathcal{N}}
\newcommand{\cO}{\mathcal{O}}
\newcommand{\cQ}{\mathcal{Q}}
\newcommand{\cR}{\mathcal{R}}
\newcommand{\cT}{\mathcal{T}}
\newcommand{\gG}{\Gamma}
\newcommand{\gga}{\gamma}
\newcommand{\gd}{\delta}
\newcommand{\bcol}{\begin{multicols}{2}}
\newcommand{\ecol}{\end{multicols}}
\newcommand{\bitem}{\begin{itemize}}
\newcommand{\eitem}{\end{itemize}}
\newcommand{\benum}{\begin{enumerate}}
\newcommand{\eenum}{\end{enumerate}}
\newcommand{\bmath}{\begin{displaymath}}
\newcommand{\emath}{\end{displaymath}}
\newcommand{\bfig}{\begin{figure}}
\newcommand{\efig}{\end{figure}}
\begin{document}

\author[B. Groff]{Bradley W. Groff\\
The University of Illinois at Chicago}
\email{bradleywgroff@gmail.com}
\title[QIs, boundaries and JSJ-trees for rel. hyp. groups]{Quasi-Isometries, Boundaries and JSJ-Decompositions of Relatively Hyperbolic Groups}

\date{\today}

\begin{abstract}
We demonstrate the quasi-isometry invariance of two important geometric structures for relatively hyperbolic groups: the coned space and the cusped space. As applications, we produce a JSJ-decomposition for relatively hyperbolic groups which is invariant under quasi-isometries and outer automorphisms, as well as a related splitting of the quasi-isometry groups of relatively hyperbolic groups.
\end{abstract}

\maketitle
\section{Introduction}

	In \cite{Gro1}, Gromov introduced relatively hyperbolic groups as a generalization
	of $\gd$-hyperbolic groups.  The first author to elaborate on this idea was Farb,
	with a new definition in his thesis \cite{Fa1}. The equivalence between Gromov's
	definition and Farb's \emph{relatively hyperbolic with bounded coset penetration}
	is established in \cite{Bo1}, and the necessity of BCP is demonstrated in \cite{Sz1}.
	Following this, there have been contributions by Bowditch \cite{Bo1} and many others,
	often describing these groups through new definitions \cite{DrSa1,GrMa1,Os1,Si1,Ya1}.
	It is important to note that the notion of relative hyperbolicity only makes sense
	given a chosen collection $\cA$ of subgroups so it is more accurate to say that $\gG$ is hyperbolic relative
	to $\cA$. For instance, hyperbolic groups, which are naturally
	hyperbolic relative to $\{1\}$, can be investigated through their 
	non-trivial relatively hyperbolic structures, e.g. the free group $F(a,b)$ 
	is hyperbolic relative to $\langle [a,b]\rangle$.
	
	These groups have enough structure to make
	them fruitful objects of study and they naturally occur in many contexts throughout
	modern geometric group theory. For instance:
	
	\bitem
		\item $\pi_1(M)$ for $M$ a complete, finite volume manifold 
			with pinched negative sectional curvature is hyperbolic 
			relative to cusp subgroups \cite{Fa2,Bo3};
		\item the fundamental group of a graph of groups with 
			finite edge groups is hyperbolic relative to vertex groups \cite{Bo3};
		\item a limit group is hyperbolic relative to maximal 
			non-cyclic abelian subgroups \cite{Al1,Da1};
		\item a group acting geometrically on CAT(0) spaces with isolated flats 
			is hyperbolic relative to the stabilizers of maximal flats \cite{DrSa1,HrKl1};
		\item a hyperbolic group is hyperbolic  relative to \{1\};
	\eitem
	
	For our purposes, the Groves-Manning description of relative hyperbolicity is 
	ideal because of the combinatorial simplicity of computing path lengths. This
	computation, combined with the geometric insight provided by \cite{Dr1},
	is the crucial component for the proof that quasi-isometries of Cayley graphs
	induce quasi-isometries of cusped spaces.

\medskip

\noindent{\bf Theorem \ref{t:cuspqi}.}  
{\em 	Let $\gG_1$ and $\gG_2$ be finitely generated groups. Suppose that $\gG_1$ 
		is hyperbolic relative to a finite collection $\cA_1$
		such that that no $A\in\cA$ is properly relatively hyperbolic. Let 
		$q:\gG_1\to\gG_2 $ be a quasi-isometry of groups. 
		Then there exists $\cA_2$, a collection of subgroups of $\gG_2$, such
		that the cusped space of $(\gG_1,\cA_1)$ is quasi-isometric to that of $(\gG_2,\cA_2)$.
}
\medskip
	
	We remark that there is an analogue for the coned space (Theorem \ref{t:coneqi}).
	Several corollaries follow from Theorem \ref{t:cuspqi}, including Corollary \ref{c:gensets}: exchanging finite
	generating sets does not change the quasi-isometry type of the cusped space,
	(Definition \ref{d:cspace}). However, our main application of Theorem \ref{t:cuspqi} 
	stems from the following:

\medskip

\noindent{\bf Corollary \ref{c:boundaryhomeo}.}  
		With $(\gG_1,\cA_1)$ and $(\gG_2,\cA_2)$ as in Theorem \ref{t:cuspqi}, 
		the cusped spaces $X(\gG_1,\cA_1)$
		and $X(\gG_2,\cA_2)$ have homeomorphic boundaries.

\medskip
	
	In \cite{Bo2,Bo5}, the actions of hyperbolic and relatively hyperbolic groups on their
	boundaries are analyzed to produce JSJ and peripheral splittings, respectively.
	The topological features of the boundary (cut pairs and cut points, Definition \ref{d:cpcp}) 
	correspond to the limit sets of two-ended and peripheral subgroups in the
	boundary, respectively.
	
	This approach was generalized to include groups acting on continua in \cite{PaSw1}.
	The action is inherited by an $\bbR$-tree which is constructed from the topological
	features of the continuum (again, cut pairs and points). We pause to note that
	even in the context of hyperbolic groups this strategy cannot be extended to 
	produce different splittings from larger
	separating sets such as Cantor sets, \cite{DePa1}.
	An application for this construction to CAT(0) groups appears
	in \cite{PaSw2}. Here, the group action on the CAT(0) boundary is analyzed
	via a variant of the convergence property to determine the isomorphism
	types of the vertex groups. 
	
	In \cite{PaSw1}, the authors suggest that the cut-point/cut-pair tree may be useful
	in the analysis of relatively hyperbolic groups. We demonstrate that this is true.
	On our way to this, we prove that this tree is simplicial in Theorem \ref{t:simptree}. 
	Further, the splitting
	produced satisfies our Definition \ref{d:JSJ} (for more generality, see \cite{GuLe2})
	of a JSJ splitting over elementary subgroups 
	relative to peripheral subgroups, Theorem \ref{t:JSJ}. 
	
	We also note that in Section 1 of \cite{Bo5} a project to extend the results of \cite{Bo2} to
	relatively hyperbolic groups is suggested. Here, the terminology \emph{local cut points}
	is used in place of our \emph{cut pairs}, Definition \ref{d:cpcp}. By using the
	convergence property we are able to identify the vertex groups of the combined
	tree. Since this is a JSJ tree and it is quasi-isometry invariant, we effectively finish
	this project.
\medskip

\noindent{\bf Theorem \ref{t:lastthm}.}  
{\em 	Let $\gG_1$ and $\gG_2$ be finitely generated groups. Suppose additionally 
		that $\gG_1$ is one-ended and hyperbolic relative to the finite collection $\cA_1$ 
		of subgroups such that no $A\in\cA$ is
		properly relatively hyperbolic or contains an infinite torsion subgroup. 
		Let $\cT$ be the cut-point/cut-pair tree
		of $\partial(\gG_1,\cA_1)$. If $f:\gG_1\to\gG_2$ is a quasi-isometry then
		\begin{itemize}
			\item $\cT$ is the cut-point/cut-pair tree for $\gG_2$ with respect to the peripheral structure
			induced by Theorem \ref{t:cuspqi},
			\item if $\text{Stab}_{\gG_1}(v)$ is one of the following types then $\text{Stab}_{\gG_2}(v)$ is of the same type,
			\begin{enumerate}
				\item hyperbolic 2-ended,
				\item peripheral,
				\item relatively QH with finite fiber.
			\end{enumerate}
		\end{itemize}
}
\medskip

Moreover, by \cite[Corollary 7.3]{MaOgYa1} the set of relatively hyperbolic structures on $\gG$ 
forms a partial order and by our condition on the parabolic subgroups, this peripheral
structure is the unique maximal structure. Combining this with Corollary \ref{c:boundaryhomeo}, we see that the action of $\mathrm{Out}(\gG)$ 
induces homeomorphisms of $\partial \gG$ and preserves the JSJ tree. 
Thus, we get the following corollary.

\medskip

\noindent{\bf Corollary \ref{c:outinv}.} 
	The JSJ tree corresponding to the cut-point / cut-pair tree is a fixed point
	for the action of $\mathrm{Out}(\gG)$ on the JSJ deformation space.
\medskip

Lastly, we obtain some results controlling the structure of $\cQ\cI(\gG)$ by observing
that it acts on the same tree. This action (which is shown to be faithful by Theorem \ref{QIfaith}) can be reasonably well understood by inspecting the quasi-isometry classes of the vertex stabilizers. 
Using this, we can find bounds for the number of edges in the quotient graph $\cT/\cQ\cI$ and produce a splitting of $\cQ\cI(\gG)$ which is induced by the splitting of $\gG$, Theorem \ref{t:edges}.

\section{Preliminaries}
The following section is a brief summary of the various definitions and facts
which we will use directly. We also include some discussion relating our results to
other research. Given this brevity, we suggest several more comprehensive resources:
for coarse geometry, $\gd$-hyperbolic spaces and groups, and
their boundaries, see \cite{BrHa1}; for relatively hyperbolic groups, 
\cite{Bo1,Dr1,Fa2,GrMa1,Hr1,Os1}; for constructing group actions on
trees from group actions on continua \cite{Bo2,Bo3,PaSw1}; for JSJ decompositions,
\cite{GuLe2,GuLe3}; and for convergence actions \cite{GeMa1}. For the remainder of this paper, we assume that $\gG$ is generated by the finite set $S$.

\subsection{Relatively Hyperbolic Groups and $X(\gG,\cA)$.}
\begin{defn}\cite{GrMa1}
	Let $T$ be any graph with edges of length 1. We define 
	the \emph{combinatorial horoball} based at $T$, $\cH(=\cH(T))$
	to be the following 1-complex:
	\bitem
		\item $\cH^{(0)}=VT\times\left\{\{0\}\cup\bbN\right\}$
		\item $\cH^{(1)}=\{((t,n),(t,n+1))\}\cup\{((t_1,n),(t_2,n))\mid d_T(t_1,t_2)\le 2^n\}$.
			We call edges of the first set \emph{vertical} and of the second \emph{horizontal}.
	\eitem
\end{defn}
\begin{brmk}
	In \cite{GrMa1}, the combinatorial horoball is described as a 2-complex
	because they needed the complex to be simply connected. As we are not concerned
	with that here, we only require the 1-skeleton.
\end{brmk}
\begin{defn}\cite{GrMa1}
	Let $D:\cH\to [0,\infty)$ be defined by extending the map on vertices $(t,n)\to n$
	linearly across edges. We call $D$ the \emph{depth function} for $\cH$
	and refer to vertices $v$ with $D(v)=n$ as \emph{vertices of depth $n$} or \emph{depth $n$ vertices}.
\end{defn}
\noindent Because $T\times\{0\}$ is homeomorphic to $T$, we identify $T$ with $D^{-1}(0)$.
\begin{defn}\cite{GrMa1}\label{d:cspace}
	Let $\cA$ be a collection of subgroups of $\gG$. The \emph{cusped space} $X(\gG, S, \cA)$ is the union of $\gG$ with
	$\cH(gA)$ for every coset of $A\in\cA$, identifying
	$gA$ with the depth 0 subset of $\cH(gA)$. We suppress mention of $S,\cA$
	when they are clear from the context.
\end{defn}

\noindent For points in $X(\gG)$, we do not distinguish between the depth functions
	of distinct horoballs because horoballs only overlap at depth 0 and so this
	convention is unambiguous. Thus, we can refer
	to the depth of any vertex in $X(\gG)$ without mention of the
	associated horoball or coset.

\begin{defn}
	The elements of the collection of subgroups $\cA$ are called \emph{parabolic
	subgroups} and the subgroups which are conjugate to them are called
	\emph{peripheral subgroups}.
\end{defn}

\begin{defn}\cite{GrMa1}
	If $X(\gG, S,\cA)$
	is $\gd$-hyperbolic then we say that \emph{$\gG$ is hyperbolic relative to $\cA$} or
	that \emph{$\gG$ is a relatively hyperbolic group} or that \emph{the pair $(\gG,\cA)$
	is relatively hyperbolic}.
\end{defn}
\begin{brmk}
	As mentioned in the introduction, there are many definitions of relatively hyperbolic groups.
	These definitions are all equivalent, with the exception of Farb's in which the phrase
	``with bounded coset penetration" needs to be appended. Equivalence of Definition \ref{d:cspace}
	with others is established in \cite[Theorem 3.25]{GrMa1}.
\end{brmk}
\begin{brmk}
	Substituting $S$ for some other finite generating set $S'$ changes the topology
	of $X(\gG, S, \cA)$ and may change the value of $\gd$, but does not affect
	the hyperbolicity of the cusped space for some $\gd'$.
	Thus we omit $S$ from the definition above. See \cite{GrMa1}, or \cite{Os1}
	for this result in a different context.
\end{brmk}
\begin{brmk}
	In \cite[Section 4]{Hr1}, the construction of the cusped space is extended to groups
	which only have a finite \emph{relative generating set} (ie $\gG=\langle S\cup H\rangle$)
	by declaring a proper metric on peripheral subgroups. It seems reasonable to generalize
	our results in this fashion.
\end{brmk}
\begin{defn}
	\cite{Bo1} Given a group $\gG$ hyperbolic relative to $\cA$, the \emph{Bowditch boundary},
	$\partial(\gG, S, \cA)$ is the
	Gromov boundary of the associated cusp space, $\partial X(\gG, S,\cA)$. When
	there is no ambiguity we simply say the boundary.
\end{defn}
\begin{brmk}
	In \cite{Bo1}, the boundary is defined as the ideal boundary of a proper, hyperbolic
	space on which the group acts. This is largely motivated by the definition 
	for relatively hyperbolic groups given in \cite{Gro1}.
	Part of the appeal of the characterization of relative hyperbolicity given in \cite{GrMa1}
	is that it satisfies the conditions of the definition of \cite{Gro1}, and so it naturally fits
	with the notion of boundary developed in \cite{Bo1}. In fact, Bowditch constructs
	a very similar space to the cusped space in which the edges at depth $n$ are shrunk by a factor of
	$2^n$ instead of adding extra edges. The two spaces are quasi-isometric by the natural
	map on vertices, so the boundaries are the same.
\end{brmk}
\begin{lem}\cite[3.11]{GrMa1}\label{l:horoballboundary}
	If $A$ is a combinatorial horoball, then the Gromov boundary consists of a single point,
	denoted $e_A$, which can be represented by a geodesic ray containing only vertical
	edges.
\end{lem}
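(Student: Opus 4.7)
The plan is to prove the lemma in two pieces: produce an explicit geodesic ray of vertical edges, then show every sequence tending to infinity in $A$ defines the same boundary point as this ray.

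For the first piece, fix any $t_0 \in VT$ and let $\rho(n) = (t_0, n)$ for $n \ge 0$. The path along consecutive vertical edges shows $d_A(\rho(0), \rho(n)) \le n$. For the matching lower bound, observe that every vertical edge changes depth by $\pm 1$ while every horizontal edge preserves depth, so any path from a depth-$0$ vertex to a depth-$n$ vertex contains at least $n$ vertical edges and thus has length $\ge n$. Hence $\rho$ is a geodesic ray.

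For the second piece, my plan is to use the up-and-over distance estimate
\[
d_A\bigl((t,m),(t',m')\bigr) = 2\max\bigl(m,\, m',\, \lceil \log_2 d_T(t,t')\rceil\bigr) - m - m' + O(1),
\]
whose upper bound is realized by the path that ascends to the common depth $N = \max(m, m', \lceil \log_2 d_T(t, t') \rceil)$, crosses via a single horizontal edge (valid since $d_T(t,t') \le 2^N$), and descends; the matching lower bound is \cite[Lemma 3.10]{GrMa1}, reflecting the fact that each horizontal edge at depth $n$ traverses at most $2^n$ in $T$. Fix $p = (t_0, 0)$ and let $v_n = (s_n, h_n)$, $w_n = (s'_n, h'_n)$ be sequences in $A$ with $d_A(v_n, p), d_A(w_n, p) \to \infty$. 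I would plug the formula into the Gromov product, apply the $T$-triangle inequality $d_T(s_n, s'_n) \le d_T(t_0, s_n) + d_T(t_0, s'_n)$, and use the elementary estimate $\alpha + \beta - \max(\alpha, \beta, \gamma) \ge \min(\alpha, \beta) - O(1)$ (valid when $\gamma \le \max(\alpha, \beta) + O(1)$) to obtain
\[
(v_n \mid w_n)_p \;\ge\; \tfrac{1}{2}\min\bigl(d_A(v_n, p),\, d_A(w_n, p)\bigr) - O(1) \;\longrightarrow\; \infty.
\]
Hence every Gromov sequence in $A$ is equivalent to $\rho$, and so $\partial A = \{e_A\}$ with $e_A = [\rho]$.

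The main obstacle is the lower bound in the distance formula, which I would invoke from \cite{GrMa1}; the remaining steps reduce to rearranging the inequality above and observing that $\max(h, \lceil\log_2 d_T(t_0, s)\rceil) \ge \tfrac{1}{2}\, d_A((s,h), p) - O(1)$, which is immediate from the distance formula itself.
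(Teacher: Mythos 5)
The paper offers no proof of this lemma: it is quoted directly from \cite[Lemma 3.11]{GrMa1}, so there is no in-paper argument to compare against. Your proposal is correct and is essentially the standard argument one extracts from Groves--Manning: counting vertical edges gives that $\rho$ is geodesic, and the up-and-over distance formula reduces the Gromov product $(v_n \mid w_n)_p$ to $\alpha+\beta-\gamma+O(1)$ with $\gamma\le\max(\alpha,\beta)+1$ (via the $T$-triangle inequality), which forces every sequence going to infinity to be equivalent to $[\rho]$. The only inputs you do not supply yourself are the lower bound in the distance formula and the hyperbolicity of the horoball (needed for the Gromov boundary to be well behaved), both of which you correctly attribute to \cite{GrMa1}.
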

Because our proof of Theorem \ref{t:cuspqi} can be simplified to prove an analogous result
for the coned space, we include its definition.
\begin{defn}\cite{Bo1}
	Let $G$ be a group with a finite collection of subgroups $\cB$. For every coset $gB_i$
	of an element of $\cB$, we add to the Cayley graph a vertex $v_{gB_i}$ and from every element of $gB_i$
	we add an edge to $v_{gB_i}$. We call this the \emph{coned space} of $(G,\cB)$.
\end{defn}
\begin{defn}\cite{Bo1}
	A graph is \emph{fine} if for every $n>0$ and every edge $e$, the number of cycles 
	of length at most $n$ containing $e$ is finite.
\end{defn}

\begin{defn}[\cite{Bo1} Alternate characterization of relative hyperbolicity]
	Let $C$ be the Cayley graph of a group $G$ generated by a finite set $S$ and 
	let $\cA$ be a collection of subgroups of $G$. If the coned space of $(G,\cA)$
	is hyperbolic and fine, then $(G,\cA)$ is relatively hyperbolic \cite[p. 27]{Bo1}.
\end{defn}
The following three definitions appear in Chapter 4 of \cite{Os1}.
\begin{defn} An element $g$ of
	is called \emph{hyperbolic} if $g$ is not conjugate into any $A_i\in\cA$.
\end{defn}
\begin{defn}
	A subgroup $H$ of a relatively hyperbolic group $\gG=\langle S|R\rangle$ is called 
	\emph{relatively quasi-convex} if there exists $\sigma>0$ such
	that the following condition holds. Let $f,g$ be two elements of $H$, 
	and $p$ an arbitrary geodesic path from $f$ to $g$ in $\mathrm{Cay}(\gG,S\cup\cA)$.
	Then for any vertex $v \in p$, there exists a vertex $w \in H$ such that
	$dist_{S}(u,w)\leq \sigma$.
\end{defn}
\begin{defn}
	If $H$ is relatively quasi-convex and $H\cap A^g$ is finite for all $g\in\gG$ and $A\in\cA$, then
	$H$ is called \emph{strongly relatively quasi-convex}.
\end{defn}

\begin{thm}[\cite{Os1}, Theorem 4.19]\label{t:2endsrq}
	The centralizer of a hyperbolic element in a relatively hyperbolic group is strongly relatively quasi-convex.
\end{thm}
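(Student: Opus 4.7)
The plan is to work in the cusped space $X=X(\gG,\cA)$, which is $\gd$-hyperbolic, and treat $g$ as a loxodromic isometry. The starting point is to produce a quasi-axis $\alpha$ for $g$ that stays at uniformly bounded depth in $X$: since $g$ is not conjugate into any peripheral subgroup, no positive power of $g$ can have an arbitrarily long subword supported in a single peripheral coset, and the combinatorial horoball geometry of \cite{GrMa1} then forces $\alpha$ to exit each $\cH(cA)$ after bounded depth. Denote the resulting uniform depth bound by $D$.

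For any $h\in C_\gG(g)$, the translate $h\alpha$ is again a quasi-axis for $g=hgh^{-1}$ with the same translation length, and in any $\gd$-hyperbolic space two quasi-axes of a common loxodromic isometry are at bounded Hausdorff distance. Thus $h$ lies in a uniform $R$-neighborhood of $\alpha$. Because the depth-$0$ vertices of $\alpha$ are within bounded $d_S$-distance of the corresponding cusped-space points (the axis dips at most $D$ into any horoball), one can then bound in $\mathrm{Cay}(\gG,S\cup\cA)$ the distance from an arbitrary geodesic between two elements of $C(g)$ to the subgroup itself, giving relative quasi-convexity.

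For the strong condition, consider $c\in C(g)\cap A^x$ with $A\in\cA$. If $c$ had infinite order it would be parabolic, fixing only the parabolic point $p_{A^x}\in\partial(\gG,\cA)$. Commutation with the loxodromic $g$ forces $c$ to preserve $\{g^{+\infty},g^{-\infty}\}$ and $g$ to fix $p_{A^x}$; but loxodromic and parabolic fixed points cannot coincide, a contradiction. Hence $C(g)\cap A^x$ is a torsion subgroup. A standard convergence-action argument applied to the setwise stabilizer of $\{g^{+\infty},g^{-\infty}\}$ then shows $\langle g\rangle$ has finite index in $C(g)$, so $C(g)$ is virtually cyclic, and any torsion subgroup of a virtually cyclic group is finite.

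The main obstacle is establishing the shallow-depth property of $\alpha$: making precise why hyperbolicity of $g$ prevents its axis from diving arbitrarily deep into horoballs requires the full Groves--Manning distance estimates together with an analysis of how a periodic quasi-geodesic enters and exits a horoball in controlled pieces. Once this depth bound is in hand, the boundary dynamics and the virtual cyclicity of $C(g)$ follow by routine convergence-group arguments on $\partial(\gG,\cA)$.
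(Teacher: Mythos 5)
The paper does not prove this statement at all---it is imported verbatim from Osin \cite{Os1}---so there is no internal proof to compare against; your proposal must be measured against Osin's original argument, which is a combinatorial one carried out in the relative Cayley graph $\mathrm{Cay}(\gG,S\cup\cA)$ using his machinery of components, backtracking, and translation numbers of hyperbolic elements. Your route through the cusped space and the boundary dynamics is genuinely different, and it is viable; it also fits the spirit of this paper, which elsewhere (Proposition \ref{p:necklaceQH}) already invokes the equivalence of relative quasi-convexity with quasi-convexity in the cusped space via \cite{MaPe1}. You should lean on that equivalence explicitly rather than trying to bound distances along relative geodesics by hand, since relating geodesics of $\mathrm{Cay}(\gG,S\cup\cA)$ to geodesics of $X(\gG,\cA)$ is itself nontrivial and is exactly what that equivalence packages.

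Two substantive points. First, the step you flag as the main obstacle---bounding the depth of a quasi-axis---can be avoided entirely: take the orbit $\langle g\rangle\cdot 1$ as your quasi-axis, which lies at depth $0$ by construction. What actually needs proof is that this orbit is a quasi-geodesic in $X(\gG,\cA)$, i.e.\ that $g$ is loxodromic; this follows because the action on the cusped space is metrically proper (ruling out elliptic behaviour for infinite-order elements) and because a parabolic element fixes a parabolic point whose stabilizer is a conjugate of some $A\in\cA$, contradicting hyperbolicity of $g$. Once $C(g)$ lies in a uniform neighbourhood of $\langle g\rangle\cdot 1$, local finiteness of $X$ gives $[C(g):\langle g\rangle]<\infty$ directly, so $C(g)$ is virtually cyclic and $C(g)\cap A^x$, being torsion by your (correct) fixed-point argument, is finite; the separate convergence-group step is not needed. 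Second, a genuine caveat: Osin's theorem assumes $g$ has infinite order, a hypothesis silently dropped in the statement above. Your argument tacitly restores it by declaring $g$ loxodromic; for a finite-order element not conjugate into any $A\in\cA$ the conclusion can fail, so this hypothesis is necessary and should be stated, not assumed.
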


\begin{defn}
	A subgroup of a relatively hyperbolic group is called \emph{elementary} if
	it is either virtually cyclic or a subgroup of a peripheral subgroup.
\end{defn}


\subsection{JSJ Decompositions}

JSJ decompositions were first introduced in the context of 3-manifold topology
in the works of Jaco and Shalen and, independently, Johannson \cite{JaSh1,Jo1}. 
The manifold is separated by a maximal system of disjoint, embedded tori 
with the goal of understanding the structure of the ambient manifold 
by inspecting the individual components.

This idea was ported to geometric group theory originally by Kropholler
\cite{Kr1} and then by Rips and Sela, \cite{RiSe1}. Several authors have expanded on these
ideas, including \cite{Bo2,DuSa1,FuPa1,Pa1,PaSw2}, and, 
in a slightly different manner, \cite{ScSw1}. In the group theoretic setting, 
JSJ decompositions encode a maximal amount of the
information related to two-ended splittings (or, in the case of \cite{RiSe1}, $\bbZ$-splittings)
as a graph of groups. 

Several of these notions have recently been unified 
by the results in \cite{GuLe1,GuLe2,GuLe3}. It is the language presented
here which we adopt. Since we are interested only in a particular type of JSJ-decomposition,
we do not include the most general definitions.

\begin{defn}\label{d:JSJ}
	Let $\gG$ be hyperbolic relative to $\cA$. An \emph{elementary JSJ splitting relative
	to $\cA$} is a tree, $T$, with a $\gG$ action such that the following hold.
	\begin{enumerate}
		\item all edge stabilizers are elementary subgroups;
		\item (universally elliptic) any edge stabilizer of $T$ fixes a 
			point in any other tree with property (1);
		\item (maximal for domination) for any tree $T'$ satisfying (2), 
			every vertex stabilizer of $T$ stabilizes a vertex of $T'$; and
		\item (relative to $\cA$) all subgroups of elements of $\cA$ fix a point in T.
	\end{enumerate}
\end{defn}

In \cite{GuLe2}, the authors promote the notion that the correct objects of study for JSJ
decompositions should not be JSJ trees because they are not unique. Rather,
a collection of trees is more fundamental:
\begin{defn}
	A maximal collection of trees which all satisfy Definition \ref{d:JSJ} is called a
	\emph{JSJ deformation space}.
\end{defn}

In the study of JSJ decompositions, one important focus is on understanding
those subgroups in which there are many mutually incompatible 
splittings. Such pairs of splittings are called \emph{hyperbolic-hyperbolic} in
\cite{RiSe1} and are best understood as analogous to splittings of a surface group over two simple
closed curves with an essential intersection. It is impossible to realize both splittings
simultaneously with a common refinement of the graphs of groups.

The subgroups with this property have various names in different contexts, including
\emph{quadratically hanging} \cite{RiSe1}, \emph{maximal hanging Fuchsian}
\cite{Bo2}, \emph{orbifold hanging vertex} \cite{Pa1}. These names all seek to
describe the same central idea: many pairs of simple closed curves on surfaces
intersect. Consequently, surface groups have many $\bbZ$-splittings which can not be
simultaneously realized in a common graph of groups. The essential power of these ideas
is that whenever these `hyperbolic-hyperbolic' splittings occur in finitely presented groups, the 
situation is always very close to the surface case. There is a more general definition
given in \cite{GuLe2} which happens to encompass all of the above.

\begin{defn}\label{d:flexible}
	Let $\gG_v$ be a vertex group for a JSJ tree. If $\gG_v$ is not universally
	elliptic, then $\gG_v$ is called \emph{flexible}.
\end{defn}

For our purposes, this definition falls 
somewhat short. We are interested
in identifying subgroups up to quasi-isometry and flexibility is not preserved
under such maps. For example, compare closed hyperbolic surface 
groups with hyperbolic triangle groups. In the context of relatively hyperbolic
groups, there is a larger class of subgroups which we can use instead of flexible
subgroups and which will reflect the coarse geometry directly: relatively QH
subgroups with finite fiber \cite{GuLe2}. 

\begin{defn}\label{d:QH}
	Given a group with a JSJ tree relative to $\cA$, a subgroup $Q$ is a 
	\emph{relatively QH-subgroup} if it satisfies the following:
	\begin{enumerate}
		\item an exact sequence, with $\cO$ a hyperbolic 2-orbifold, $F$ called the \emph{fiber}:
			$$1\to F\to Q \to \pi_1(\cO)\to1$$
		\item the images of incident edge groups are either finite or contained
			in a boundary subgroup of $\pi_1(\cO)$.
		\item every conjugate of an element of $\cA$ intersects $Q$ with image
			either finite or contained in a boundary component of $\pi_1(\cO)$.
	\end{enumerate}
\end{defn}

Specifically, these subgroups will be detectable from the boundary of the group,
and thus will be realized as vertex groups for the cut-point/cut-pair tree.

\subsection{Convergence Actions}

Suppose $\gG$ acts on a space $X$ by homeomorphisms. 

\begin{defn} A sequence $(g_i)$ is a \emph{convergence sequence on X} if there
exists points $x_1,x_2\in X$ such that for any compact $C$ not containing $x_1$,
$g_i(C)\to x_2$.
\end{defn}

\begin{defn} If every sequence in $G$ contains a convergence subsequence, then $G$ acts
as a convergence group on $X$.
\end{defn}

Both hyperbolic and relatively hyperbolic groups are characterized
by types of convergence actions which they exhibit on their boundaries \cite{Bo3,Ya1}.
This allows for the identification of stabilizers of certain topological features in these
contexts. We also require the classic theorem

\begin{thm}[\cite{CaJu1, Ga1, Tu1}]\label{t:convergence}
	Let $G$ be a subgroup of $\mathrm{Homeo}(S^1)$. $G$ acts as a convergence group
	on $S^1$ if and only if $G$ is Fuchsian.
\end{thm}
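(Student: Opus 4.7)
The plan is to handle the two directions separately, with the difficult direction being the converse (due to Tukia, Gabai, and Casson--Jungreis). For the easy direction, I would verify that any Fuchsian group $G \le \mathrm{PSL}(2,\bbR)$ satisfies the convergence property on $S^1 = \partial \bbH^2$. Since $G$ acts properly discontinuously on $\bbH^2$ by isometries, for any sequence $(g_i)$ without a convergent subsequence one can choose a basepoint $p \in \bbH^2$ and pass to a subsequence so that $g_i(p) \to \xi^+ \in S^1$ and $g_i^{-1}(p) \to \xi^-$. A standard hyperbolic geometry computation (comparing visual angles from $p$ and $g_i(p)$) then shows that $g_i$ converges uniformly on compact subsets of $S^1 \setminus \{\xi^-\}$ to the constant map $\xi^+$, which is exactly the convergence property.

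For the converse, I would follow the framework laid out in the cited papers. The key steps are: (i) classify individual elements of $G$ as elliptic (possessing a fixed point and topologically conjugate to a rotation), parabolic (one fixed point with parabolic source/sink dynamics), or loxodromic (two fixed points with north-south dynamics), using Brouwer's classification of orientation-preserving circle homeomorphisms together with the convergence property to rule out denser dynamics; (ii) deduce that the diagonal $G$-action on the space $\cT(S^1)$ of distinct ordered triples of points of $S^1$ is properly discontinuous, by unpacking the definition of a convergence sequence; (iii) use the classical identification of $\cT(S^1)$ with the unit tangent bundle of $\bbH^2$ (equivalently, with $\mathrm{PSL}(2,\bbR)$ itself) to produce a properly discontinuous action of $G$ on $\bbH^2$ by orientation-preserving homeomorphisms extending the given boundary action; and (iv) construct a $G$-equivariant homeomorphism $\phi \colon S^1 \to S^1$ that conjugates $G$ into $\mathrm{PSL}(2,\bbR)$.

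The main obstacle is step (iv): simply realizing $G$ as a discrete group of homeomorphisms of $\bbH^2$ does not make its elements M\"obius transformations. Tukia's strategy is to use a quasi-conformal deformation argument, producing a $G$-invariant conformal (or measurable conformal) structure on the disk which, by the measurable Riemann mapping theorem, can be straightened to the standard one. Gabai's more topological approach instead builds a $G$-invariant lamination from the configuration of fixed points and resolves it to a Fuchsian model. Both routes require delicate case analysis for the virtually cyclic and elementary parabolic cases, and for finite-order elements one must rule out topologically exotic involutions by checking that any element of finite order has the correct number of fixed points and acts like a rotation on a collar. Once $\phi$ is produced, the verification that $\phi G \phi^{-1}$ consists of M\"obius transformations is routine.
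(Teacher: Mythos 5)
This theorem is quoted by the paper from \cite{CaJu1,Ga1,Tu1} with no internal proof, so the only meaningful question is whether your sketch could stand on its own; it cannot. The forward direction (Fuchsian $\Rightarrow$ convergence) is fine and standard. For the converse, steps (i) and (ii) are correct reductions, but step (iii) already hides the main gap: a convergence action on $S^1$ does give a properly discontinuous action on the space of distinct (ordered) triples, and that space is homeomorphic to the unit tangent bundle of $\bbH^2$, but the induced action has no reason to respect the fibration over $\bbH^2$, so it does not descend to, or extend to, a properly discontinuous action on the disk. Producing such an action --- equivalently, showing the quotient $3$-orbifold of the triple space is Seifert fibered --- is essentially the entire content of the theorem and is exactly where Gabai and Casson--Jungreis do their work; it is not a consequence of ``the classical identification'' of the triple space with $\mathrm{PSL}(2,\bbR)$.

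Step (iv) you rightly call the main obstacle, but the resolution you gesture at is also not available as stated: the measurable Riemann mapping argument requires a uniformly quasiconformal (or quasi-M\"obius) action, which a general convergence action is not assumed to be, and Tukia's actual argument is both longer and incomplete on its own (the remaining cases, roughly the cocompact ones with torsion, are what \cite{CaJu1} and \cite{Ga1} supply). In short, your outline is an accurate map of the architecture of the literature, but every substantive difficulty is deferred to the cited papers, so as a proof it has the same logical status as the paper's own treatment, namely a citation. That is a perfectly reasonable thing to do here --- this is a deep theorem --- but it should be presented as a citation, not as a proof sketch that could be completed in a few pages.
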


\noindent A \emph{Fuchsian group} is a discrete subgroup of 
M\" obius transformations of $D^2$. In the proof of Theorem 
\ref{t:cuspqi} we will use the convergence action to identify
the vertex groups of our splitting.

\section{The Tree from the Boundary} 

Given a group acting on a continuum by homeomorphisms, \cite{PaSw1} constructs an $\bbR$-tree with vertices
representing the topological features of the continuum (cut points and cut pairs).
The tree inherits the action of the group on the continuum. We condense their
exposition in anticipation of demonstrating that this tree is simplicial and of JSJ type in the context of relative
hyperbolicity in Sections \ref{s:Rips} and \ref{s:JSJ}. For the remainder of this section we assume that $\gG$ is a relatively
hyperbolic group.

\begin{defn}
	A \emph{continuum} is a compact connected metric space.
\end{defn}

\begin{defn} \label{d:cpcp}Given a continuum $X$, a point $x\in X$ is a \emph{cut point} if
	$X\setminus \{x\}$ is not connected. If $\{a,b\}\subset X$ contains no cut points and
	$X\setminus\{a,b\}$ is not connected, then $\{a,b\}$ is a \emph{cut pair}. A set $Y$ is
	called \emph{inseparable} if no two points of $Y$ lie in different components of the 
	complement of any cut pair.
\end{defn}

These separating features occur as the fixed points of peripheral or hyperbolic two-ended
subgroups over which $\gG$ splits. Given that we want to also understand when there are many mutually
incompatible splittings (as in Definitions \ref{d:flexible} and \ref{d:QH}), we have terminology reflecting
interlocking cut pairs. These pairs arise in our context as the
endpoints of pairs of hyperbolic two-ended subgroups over which the group
splits but which admit no common refinement.

\begin{defn}
	Let $X$ be a continuum without cut points. A finite set $S$ is called
	a \emph{cyclic subset} if there is an ordering $S=\{s_1,s_2,\ldots,s_n\}$
	and continua $M_1,\ldots M_n$ such that 
	\begin{enumerate}
		\item $M_i\cap M_{i+1} = \{s_i\}$, subscripts mod $n$
		\item $M_i\cap M_j = \emptyset$ whenever $|i-j|>1$
		\item $\cup M_i = X$
	\end{enumerate}
	An infinite subset in which all finite subsets of cardinality at least 2 are
	cyclic is also called cyclic. 
\end{defn}

\begin{defn}
	A maximal cyclic subset with at least 3 elements is called a \emph{necklace}.
\end{defn}

Cyclic subsets arise as collections of mutually separable cut pairs. We also note
that an inseparable cut pair
can be in the closure of more than one necklace, but if the cut pair is not inseparable
then that necklace is unique.

Given a continuum $X$, we define an equivalence relation $\sim$ such that
any cut point is equivalent only to itself and for $x,y$ which are not cut pairs, $x\sim y$
if and only if there is no cut point $z$ such that $x$ and $y$ are in different components
of $X\setminus z$.

We would like to define a similar notion for cut pairs but the extra structure
(particularly the interaction between maximal inseparable sets and necklaces) 
makes this difficult. Instead, we directly construct subsets of the powerset of $X$ which
reflect the topology. Let $\cR$ be the collection containing all inseparable cut pairs,
necklaces and maximal inseparable sets of $X$. We claim that this structure
is compatible with $\sim$, ie that $\cR$ is the union of sets defined similarly
on each class of $\sim$. This follows from the following lemma, that cut points do not 
separate cut pairs.

\begin{lem}\label{l:cpsepcp}
	Suppose that $T$ is a connected topological space with cut point $x$. If $\{y,z\}$
	is a cut pair then $y$ and $z$ are in the same component of $T\setminus x$.
\end{lem}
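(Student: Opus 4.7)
The plan is to argue by contradiction: assume $y$ and $z$ lie in different components of $T\setminus\{x\}$, and derive that $T\setminus\{y,z\}$ must be connected, contradicting the hypothesis that $\{y,z\}$ is a cut pair.

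First I would produce an explicit clopen decomposition of $T\setminus\{x\}$. In the continuum setting, ``different components'' yields a separation $T\setminus\{x\}=U\sqcup V$ with $U,V$ disjoint, nonempty, open in $T$, and $y\in U$, $z\in V$. A standard connectedness argument (if $x\notin\overline{U}$ then $U$ is clopen in $T$) shows $x\in\overline{U}\cap\overline{V}$.

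The heart of the argument is to show that each of the two ``hemispheres'' $H_U:=(U\setminus\{y\})\cup\{x\}$ and $H_V:=(V\setminus\{z\})\cup\{x\}$ is connected. By symmetry I focus on $H_U$. Suppose, for contradiction, that $H_U=C_1\sqcup C_2$ is a separation with $x\in C_1$. I would promote this to a separation of the ambient space $T\setminus\{y\}$ via the two pieces $C_2$ and $C_1\cup V$. The piece $C_2\subseteq U\setminus\{y\}$ is open in $T$, because any subspace-open subset of an open set of $T$ is open in $T$. For $C_1\cup V$: openness at points of $V$ is immediate, openness at points of $C_1\setminus\{x\}\subseteq U\setminus\{y\}$ is by the same subspace argument, and openness at $x$ uses an $H_U$-neighborhood of $x$ contained in $C_1$, shrunk via $T_1$-separation to avoid $y$; such a neighborhood in $T$ then sits inside $C_1\cup V$. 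This yields a separation of $T\setminus\{y\}$ into two nonempty open sets, contradicting that $y$ is not a cut point.

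With $H_U$ and $H_V$ both connected and sharing the point $x$, their union $H_U\cup H_V=T\setminus\{y,z\}$ is connected, the desired contradiction. The delicate step is the promotion of a hypothetical separation of $H_U$ to one of $T\setminus\{y\}$; the key observation is that $V$ is open, disjoint from $U$, and can be adjoined to whichever of $C_1,C_2$ contains $x$ without disturbing openness (provided one first uses $T_1$-separation to remove $y$ from the neighborhood of $x$).
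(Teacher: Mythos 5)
Your proof is correct, but it runs in the opposite direction from the paper's. The paper argues directly: it takes the component $C_2$ of $T\setminus\{y,z\}$ \emph{not} containing $x$ and uses it to connect $y$ to $z$ inside $T\setminus\{x\}$. You argue by contradiction: you separate $T\setminus\{x\}$ into $U\ni y$ and $V\ni z$, prove that each hemisphere $(U\setminus\{y\})\cup\{x\}$ and $(V\setminus\{z\})\cup\{x\}$ is connected, and glue them at $x$ to conclude that $T\setminus\{y,z\}$ is connected, contradicting the cut-pair hypothesis. Your route is longer but has a real advantage: it makes explicit and essential use of the hypothesis that $y$ and $z$ are not cut points (that is exactly what rules out a separation of $T\setminus\{y\}$ in your ``promotion'' step), whereas the paper's proof invokes this only implicitly when asserting that $w$ and $y$ lie in the same component of $T\setminus\{x,z\}$; your version is the more self-contained of the two. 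One caveat applies to both arguments but is worth stating in yours, since you lean on it at the very first step: upgrading ``$y$ and $z$ lie in different components of $T\setminus\{x\}$'' to a genuine clopen separation $U\sqcup V$ requires that components of this open set agree with quasi-components. Compactness of $T$ alone does not obviously give this for the non-compact subspace $T\setminus\{x\}$; it does follow from local connectedness, which holds in the only setting where the lemma is applied ($T=\partial(\gG,\cA)$ is a Peano continuum by the quoted theorem of Bowditch). With that understood, your gluing argument is sound: the openness checks for $C_2$ and for $C_1\cup V$ at points of $V$, at points of $C_1\setminus\{x\}$, and at $x$ (after shrinking the neighborhood to avoid $y$) are all correct, and $H_U\cup H_V=T\setminus\{y,z\}$ with $H_U\cap H_V=\{x\}$ gives the desired contradiction.
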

\begin{proof}
	Let $C_1$ be the component of $T\setminus \{y,z\}$ containing $x$. 
	Let $w$ be a point in another component, $C_2$. Clearly
	$x$ separates $C_1$ but not $C_2$. Thus, $w$ and $y$ are in 
	the same component of $T\setminus\{x,z\}$ and $w$ and $z$ are
	in the same component of $T\setminus\{x,y\}$. Thus, $y$ and $z$
	are in the same component of $T\setminus x$.
\end{proof}

In \cite[Theorems 12, 13, 14]{PaSw1}, $\sim$ is shown to satisfy a `betweenness'
property so that a process of `connecting the dots' can fill it in to an $\bbR$-tree.
\cite[Corollary 31]{PaSw1} serves the same purpose for $\cR$. The combination of
the two (which Lemma \ref{l:cpsepcp} justifies)
is discussed in Section 5 of \cite{PaSw1}, see Figure \ref{f:continuum}.
Obviously a group action by homeomorphisms on a continuum is
inherited by this tree, although the action is not \emph{a priori} isometric.
In fact, this $\bbR$-tree does not necessarily come equipped with a metric.

	\begin{figure}[t]
	\centering
	\begin{lpic}[]{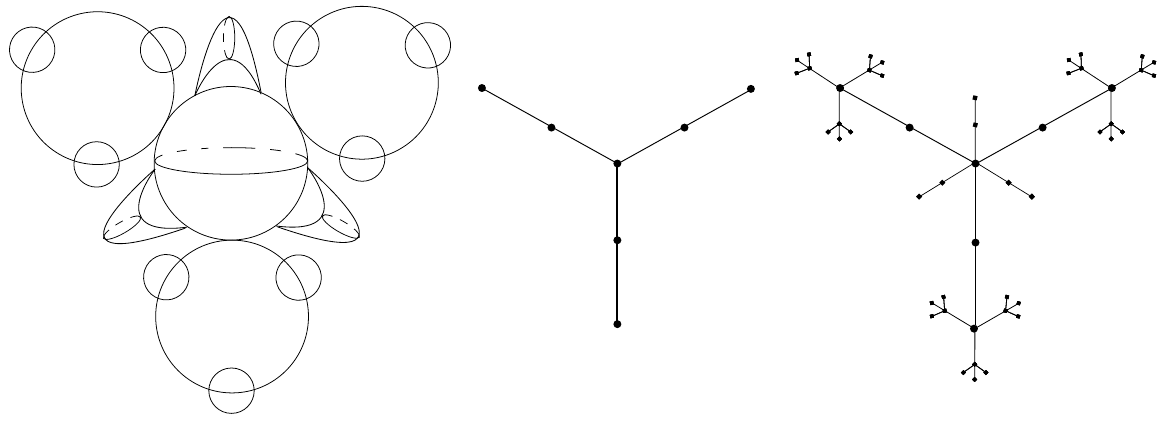}
		
	\end{lpic}
	\caption{A continuum with the associated cut-point tree and combined tree.\label{f:continuum}}
	\end{figure}

Of course, the most important question about an action of a group on
an $\bbR$-tree is whether it can be promoted to an action on a simplicial
tree, or whether the $\bbR$-tree itself is simplicial. This is often accomplished 
via the Rips machine, which can be applied
in the context of a reasonably nice action.

\begin{defn}
	Let $\gG$ act on the $\bbR$-tree $T$ by homeomorphisms. We say the action is \emph{nesting}
	if there exists a $g\in\gG$ and an interval $I\subset T$ such that $g(I)$ is
	properly contained in $I$. Otherwise, we say the action is \emph{non-nesting}.
	
	A non-degenerate arc $I$ is called \emph{stable} if there is a non-degenerate sub-arc 
	such that for any non-degenerate arc $K\subset J$, 
	$\mathrm{Stab}(K) = \mathrm{Stab}(J)$. An action is called \emph{stable} 
	if any closed arc I of T is stable.
\end{defn}

	We remark that the cut-point tree (ie the tree produced by connecting the dots
	for $\sim$) is simplicial whenever the boundary is
	connected and locally connected \cite[Theorem 9.2]{Bo5}. This can be achieved by the following
	mild constraints on $\cA$.
	
	\begin{thm}[\cite{Bo5}, Theorem 1.5]
		Suppose that $\gG$ is relatively hyperbolic and that each peripheral subgroup is
		one- or two-ended and contains no infinite torsion subgroup. If $\gG$ is
		connected then it is locally connected.
	\end{thm}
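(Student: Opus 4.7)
The plan is to argue by contradiction, combining Swarup's strategy for hyperbolic groups with an analysis of neighborhoods of parabolic points. Suppose $\partial(\gG,\cA)$ is connected but not locally connected. Then by a convergence-continuum argument in the style of Bing, there is a nondegenerate continuum $K\subset\partial\gG$ of some diameter $\geq\epsilon$ that is the Hausdorff limit of a sequence of pairwise disjoint continua $K_i$ each of diameter $\geq\epsilon$. The goal is to produce a contradiction by using the convergence action of $\gG$ on $\partial\gG$ to relocate this data to a controlled position.

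First I would exploit the fact that $\gG$ acts cocompactly on the space of distinct triples in $\partial\gG$: by pulling back with a sequence $g_i\in\gG$, we can arrange that $g_iK_i$ accumulates on a fixed continuum $K'$ with $\mathrm{diam}(K')\geq\epsilon$, and that the dynamics of $(g_i)$ fall into the standard convergence trichotomy (bounded, loxodromic-type, or tending to a parabolic fixed point). In the first two cases, the argument reduces essentially to Swarup's theorem for $\gd$-hyperbolic groups applied to the action on the thick part of the cusped space $X(\gG,\cA)$; a limit continuum of convergence survives and contradicts the Swarup/Bowditch analysis of tight cycles and discs in a hyperbolic space on which a group acts geometrically on the thick part.

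The genuinely new case is the third, where $K'$ concentrates near a parabolic fixed point $p=e_{gA}$. Here I would use the hypotheses on $\cA$ decisively. The peripheral stabilizer $A$ acts properly discontinuously and cocompactly on $\partial\gG\setminus\{p\}$, so local connectivity of $\partial\gG$ at $p$ is equivalent to a statement about the quotient of a punctured neighborhood by $A$ together with the end-compactification of $A$. Since $A$ is one- or two-ended and contains no infinite torsion subgroup, $A$ is either virtually cyclic or a one-ended group with a well-behaved end; in particular the Freudenthal boundary of $A$ is a singleton or a pair of points, and no wild accumulation of disjoint continua can persist in a fundamental domain of the $A$-action. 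Ruling out the escape of the $K_i$ into the cusp at $p$ therefore contradicts $\mathrm{diam}(K')\geq\epsilon$ and finishes the proof.

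The main obstacle is precisely this last step: translating the end-structure and the torsion hypothesis on $A$ into a statement prohibiting non-degenerate convergence continua in a fundamental domain for the $A$-action on $\partial\gG\setminus\{p\}$. The no-infinite-torsion assumption is what prevents pathological accumulation of finite orbits on the peripheral link, while one- or two-endedness is what makes the link compact and tractable; together they give the peripheral cross-section a locally connected structure that propagates back to local connectivity of $\partial\gG$ at $p$.
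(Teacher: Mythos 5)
This theorem is not proved in the paper at all: it is imported verbatim from Bowditch (\cite{Bo5}, Theorem~1.5) and used as a black box to guarantee that the cut-point tree construction applies. So there is no in-paper argument to compare yours against; your proposal has to stand or fall on its own, as a reconstruction of Bowditch's proof.

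On its own terms the proposal has a genuine gap, and you identify it yourself: the entire parabolic case is left as ``the main obstacle.'' The claimed equivalence between local connectivity of $\partial\gG$ at a parabolic point $p$ and ``a statement about the quotient of a punctured neighborhood by $A$ together with the end-compactification of $A$'' is asserted, not established, and it is not obviously true --- the number of ends of $A$ does not by itself control the local topology of $\partial\gG$ at $p$. What the hypotheses on the peripheral subgroups actually buy in Bowditch's argument is accessibility over peripheral splittings: the no-infinite-torsion condition prevents an infinite descending sequence of peripheral splittings, and the one- or two-endedness keeps the pieces tame; the proof then runs through the machinery of global cut points, dendrites and $\bbR$-tree actions (every global cut point is shown to be a parabolic fixed point, and local connectedness is deduced from the resulting cut-point structure), not through a Hausdorff-limit-of-continua argument localized at a cusp. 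Your reduction of the non-parabolic cases to ``Swarup's theorem applied to the thick part of $X(\gG,\cA)$'' is also only an analogy: $\gG$ does not act geometrically on a hyperbolic space with boundary $\partial\gG$ (the thick part is not such a space), so Swarup's theorem does not literally apply and the convergence-continuum contradiction in those cases is not actually derived. As written, the proposal is a plausible strategy outline rather than a proof.
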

	
	We can produce a tree by `blowing up' the vertices of subcontinua with no cut
	points according to the cut-pair structure ($\cR$), as discussed in \cite[Section 5]{PaSw1}.
	We call this the \emph{combined tree} or \emph{cut-point / cut-pair tree}, which we will
	denote by $\cT$ (Figure \ref{f:continuum}). As mentioned above,
	we seek to apply the Rips machine so we must demonstrate that the action
	is stable and non-nesting.

\section{$\mathcal{T}(\partial X(\gG))$ is Simplicial}\label{s:Rips}

	Because the cut point tree
	is simplicial, the only intervals which can be unstable are those which contain
	multiple inseparable cut pairs. Thus, we restrict our attention to those. We 
	first require some information on the
variety of finite subgroups of a relatively hyperbolic group.

\begin{lem}\label{l:conjclass}
	Let $\gG$ be a relatively hyperbolic group. There are finitely many conjugacy classes
	of finite order subgroups $F$ such that $F$ is contained in a hyperbolic two-ended 
	subgroup $H$ with $F$ fixing the ends of $H$.
\end{lem}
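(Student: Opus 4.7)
The plan is to show that each qualifying $F$ admits a $\gG$-conjugate lying in a fixed finite subset of $\gG$; since a finite subset has only finitely many subgroups, this bounds the number of conjugacy classes.

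I would work in the cusped space $X=X(\gG,\cA)$, which is $\delta$-hyperbolic. Since $F$ is a finite group of isometries of $X$ with bounded orbits, a standard quasi-centre argument in $\delta$-hyperbolic geometry gives a point $x_F\in X$ with $d_X(x_F, f\cdot x_F)\le K_0(\delta)$ for every $f\in F$. Moreover, because $F$ pointwise fixes the two endpoints $\xi_\pm\in\partial X$ of any infinite-order $h\in H$, and finite-order isometries fixing both endpoints of a hyperbolic geodesic have bounded displacement along it, $x_F$ can be chosen on a bi-infinite geodesic from $\xi_-$ to $\xi_+$.

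Next I would pass from the quasi-centre to a depth-$0$ vertex. By Theorem~\ref{t:2endsrq}, the centralizer of any hyperbolic element of $H$ is strongly relatively quasi-convex, and $H$ (being commensurable with this centralizer) inherits the same property. In particular the discrete quasi-axis $\{h^n\cdot 1\}_{n\in\bbZ}$ is a quasi-geodesic in $X$ consisting of depth-$0$ vertices, which stays within uniformly bounded Hausdorff distance of any $(\xi_-,\xi_+)$-geodesic. Thus one can find a depth-$0$ vertex $p\in\gG$ within uniformly bounded $X$-distance $K_1$ of $x_F$. Replacing $F$ by $p^{-1}Fp$ moves the quasi-centre to within $K_1$ of $1$, and the triangle inequality then gives $d_X(1,f\cdot 1)\le 2K_1+K_0=:K$ for every $f\in p^{-1}Fp$. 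Since $X$ is locally finite, the ball $B_X(1,K)$ contains only finitely many depth-$0$ vertices; call this universal finite subset $\Omega\subseteq\gG$. Then $p^{-1}Fp\subseteq\Omega$, completing the count.

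The main technical obstacle is the uniformity of $K_1$: if the $X$-translation length of $h$ is large, depth-$0$ vertices on the quasi-axis may be widely spaced, leaving the quasi-centre far from any such vertex on the axis itself. To handle this one splits into cases. If $x_F$ lies in the ``thick'' part of $X$ (bounded depth), local finiteness of $X$ provides $p$ directly. If instead $x_F$ lies deep in some horoball, $F$ must stabilize that horoball, hence lie in a peripheral conjugate $A^g$; then $F\subseteq H\cap A^g$ is finite by strong relative quasi-convexity, and a separate finiteness argument---controlled by the acylindricity of the $\gG$-action on $X$ together with the standard fact that finite subgroups not conjugate into a peripheral fall into finitely many $\gG$-conjugacy classes---completes the proof.
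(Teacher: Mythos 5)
Your overall strategy---produce a basepoint that every element of $F$ displaces by a uniformly bounded amount, conjugate that basepoint to the identity, and invoke local finiteness of the cusped space to land $F$ in a fixed finite set---is the same as the paper's, which obtains such a basepoint from the set $A_H$ of all geodesics joining the two ends of $H$ together with a uniform width bound for $A_H$ cited from Hruska. The genuine gap is in the second branch of your case division. When $x_F$ lies deep in a horoball you correctly conclude that $F$ lies in a peripheral conjugate $A^g$, but the finiteness statement you then invoke (``finite subgroups not conjugate into a peripheral fall into finitely many conjugacy classes'') concerns subgroups that are \emph{not} conjugate into peripherals, which is exactly not the situation you are in; it says nothing about $F\le A^g$. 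Nothing in the hypotheses of the lemma prevents a peripheral subgroup from containing infinitely many conjugacy classes of finite subgroups, so as written this branch is open, and the appeal to acylindricity is too vague to close it. (Also, the finiteness of $H\cap A^g$ is immaterial here: $F$ is finite by hypothesis; what you need is a bound on the number of conjugacy classes, which strong relative quasi-convexity does not by itself provide.)

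The deep-horoball case can be avoided rather than argued separately, which is essentially what the paper does. You do not need the quasi-centre at all: if $\gamma$ is any bi-infinite geodesic from $\xi_-$ to $\xi_+$ and $f$ is a finite-order isometry fixing both ends, then $f\gamma$ is again a $(\xi_-,\xi_+)$-geodesic, hence within uniform Hausdorff distance of $\gamma$; thus $f$ acts on $\gamma$ as a coarse translation, and finiteness of the order forces the translation amount to be bounded in terms of $\delta$ alone, so \emph{every} point of $\gamma$ is displaced by a constant depending only on $\delta$. Since $\xi_-\ne\xi_+$ and the boundary of a horoball is a single point, $\gamma$ cannot be eventually contained in a horoball, so it contains depth-$0$ vertices; take $p$ to be one of these and conjugate by it. This simultaneously removes the non-uniformity worry about the spacing of $\{h^n\}$ along the axis and makes the horoball case vacuous; the paper's version supplies the same uniformity via the cited width bound for $A_H$ and chooses $p\in A_H\cap\mathrm{Cay}(\gG)$ directly.
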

\begin{proof}
	The following argument is adapted from \cite{MoMO}. For every hyperbolic
	two-ended subgroup $H$, take $A_H\subset X(\gG,\cA)$ to be the set of all geodesics between
	the endpoints of $H$. There is a uniform width $W$ for all $A_H$ which is 
	independent of the choice of $H$ \cite[Corollary 8.16]{Hr1}, ie for a point 
	$p\in A_H\cap \mathrm{Cay}(\gG)\subset X(\gG,\cA)$, 
	$H\setminus N_W(p)$ is not connected.
	
	It follows that for some $k\in\bbN$, if $h\in H$ has the property that 
	$[h.N_{kW}(p)]\cap N_{kW}(p) = \emptyset$ and $h$ fixes the endpoints of
	$H$, then $h$ has infinite order.  
	
	For $F$ as above, it must be that the $F$-translates of $N_{kW}(p)$ are not
	disjoint from $N_{kW}(p)$. Conjugating by $p$ sends every such $F$ to
	a subgroup $F'\subset N_{kW}(1)$ of the same conjugacy class, and there
	are only finitely many possible $F'$.
	\end{proof}

\begin{lem}\label{l:uniformbound}
There is a uniform bound on the order of the stabilizer of an interval containing at
least two vertices corresponding to cut pairs.
\end{lem}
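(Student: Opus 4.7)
The plan is to reduce $\text{Stab}(I)$ to a bounded-index subgroup $L$ that pointwise fixes four specific points of $\partial\gG$, and then to bound $|L|$ via convergence dynamics together with Lemma~\ref{l:conjclass}.

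Choose two cut-pair vertices $v_1,v_2\in I$ corresponding to cut pairs $\{a_1,b_1\},\{a_2,b_2\}\subset\partial\gG$. The action of $\text{Stab}(I)$ on $I$, viewed as a compact arc, factors through $\bbZ/2\bbZ$ (orientation preserved or reversed); moreover, elements of $\text{Stab}(I)$ preserve the set of cut-pair vertices of $I$. Combining these two observations, there is an index-at-most-$2$ subgroup of $\text{Stab}(I)$ fixing $v_1$ and $v_2$ individually as vertices of $\cT$, and then a further index-at-most-$4$ subgroup $L$ pointwise fixing each of $a_1,b_1,a_2,b_2$ in $\partial\gG$, so that $[\text{Stab}(I):L]\le 8$.

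Every element of $L$ then fixes four distinct points of $\partial\gG$. Since the action of $\gG$ on $\partial\gG$ is a convergence action, loxodromic elements have exactly two fixed points and parabolic elements exactly one, so each element of $L$ must be elliptic and hence of finite order. Moreover, $L$ sits inside the pointwise stabilizer of the cut pair $\{a_1,b_1\}$, which by Bowditch's correspondence (\cite{Bo5}) between cut pairs and two-ended splittings is a hyperbolic two-ended subgroup $H$ with ends $a_1,b_1$. Thus $L$ is a torsion subgroup of $H$ fixing the ends of $H$, and Lemma~\ref{l:conjclass} bounds $|L|$ by a constant $N=N(\gG,\cA)$. Hence $|\text{Stab}(I)|\le 8N$, uniformly in $I$.

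The main obstacle is making the first reduction rigorous: one needs $\text{Stab}(I)$ to have a bounded-index subgroup fixing $v_1$ and $v_2$ individually, which a priori requires the cut-pair vertices in $I$ to behave discretely -- a fact we have not yet established (indeed, it is what simpliciality of $\cT$ will give us). A clean workaround is to observe first that any infinite-order $g\in\text{Stab}(I)$ would have to be hyperbolic on $\partial\gG$ with fixed points equal to the endpoints of some cut pair whose vertex $v_g$ in $\cT$ is fixed by $g$; the resulting dynamics of $g$ on $\cT$ (contracting towards $v_g$) then preclude $g$ from stabilizing a bounded arc containing two or more cut-pair vertices. This forces $\text{Stab}(I)$ to be a torsion group from the outset, after which the combinatorial reduction to $L$ proceeds without further topological subtleties.
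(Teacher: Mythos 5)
Your core argument is essentially the paper's: the paper likewise derives finiteness of $\mathrm{Stab}(I)$ from the convergence action (there by extracting a convergence subsequence from a hypothetical infinite sequence in $\mathrm{Stab}(I)$ and contradicting the assertion that each term fixes all four points of the two cut pairs), and then, exactly as you do, it obtains the \emph{uniform} bound by viewing the resulting finite group as a subgroup of the two-ended stabilizer of one of the cut pairs (Lemma \ref{l:cp2end}) fixing its ends, and invoking Lemma \ref{l:conjclass}. Your element-wise substitute --- an element of a convergence group fixing at least three boundary points is elliptic, hence of finite order, and a torsion subgroup of a two-ended group is finite --- is a legitimate variant of the sequence argument, and your index-$\le 8$ bookkeeping makes explicit a point the paper passes over silently: the paper simply asserts that each $g_n$ ``fixes all of the points of both cut pairs,'' with no discussion of why elements of $\mathrm{Stab}(I)$ cannot permute the cut pairs or the points within them (which is exactly the discreteness issue you flag, and which is present in the paper's proof as well).

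The difficulty you identify is therefore real, but your proposed workaround does not close it. First, an infinite-order element of a convergence group may be parabolic rather than loxodromic, and you give no argument excluding a parabolic element from $\mathrm{Stab}(I)$. Second, even for a loxodromic $g\in\mathrm{Stab}(I)$ there is no reason that $\{g^{+},g^{-}\}$ should be the endpoints of a cut pair; that is a property of generators of stabilizers of inseparable cut pairs, and assuming it here is close to assuming what you want to prove. Third, the claim that $g$ ``contracts towards $v_g$'' on $\cT$ is an assertion about dynamics on the $\bbR$-tree that has not been established at this stage; translating north--south dynamics on $\partial\gG$ into statements about the action on $\cT$ is essentially the content of Lemma \ref{l:nonnesting} and cannot be taken for granted inside this proof. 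So the workaround should be dropped; what remains is the paper's own argument, with its unresolved permutation/discreteness issue made visible rather than repaired. The concluding application of Lemmas \ref{l:cp2end} and \ref{l:conjclass} to get uniformity is correct and identical to the paper's.
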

\begin{proof}

	Let $I$ be any interval containing at least two 
	inseparable cut pairs, $A, B$. We show that $\mathrm{Stab}(I)$ is finite. If 
	$\{g_n\}\subset\mathrm{Stab}(I)$ is an infinite sequence of elements then
	we may assume that $g_n(x)\to p$ for all $x\in\partial\gG$, perhaps after 
	passing to a subsequence. However, each $g_n$ must fix all of the points
	of both cut pairs. This is a contradiction.
	
	Additionally, this finite subgroup satisfies the hypotheses of Lemma \ref{l:conjclass}
	because it is a subgroup of $\mathrm{Stab}(A)$. There are only finitely
	many conjugacy classes of such subgroups so there is a uniform bound
	on the order of $\mathrm{Stab}(I)$. It follows immediately that the action is stable.
\end{proof}

\begin{cor}\label{c:stable}
	The action of $\gG$ on $\cT$ is stable.
\end{cor}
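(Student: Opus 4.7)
The plan is to observe that Corollary \ref{c:stable} is an immediate consequence of Lemma \ref{l:uniformbound} together with the simpliciality of the cut-point part of $\cT$ (cited above from \cite[Theorem 9.2]{Bo5}), so the work is really just organizing cases for an arbitrary non-degenerate arc $I \subset \cT$.

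First I would recall that, under the standing assumption on $\cA$, the boundary $\partial(\gG,\cA)$ is connected and locally connected, so the cut-point tree is simplicial. Every vertex of $\cT$ either comes from the cut-point tree or lies in the ``blow-up'' corresponding to a cyclic subset or maximal inseparable set. Hence a non-degenerate sub-arc of $I$ falls into one of two categories: either it lies entirely inside an edge of the cut-point part of $\cT$, or it meets the blow-up of some no-cut-point subcontinuum and, shortened if necessary, can be chosen to contain at least two vertices corresponding to inseparable cut pairs.

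Next I would handle each case. In the first case, the cut-point tree being simplicial means the stabilizer of a sub-arc of a single edge is just the edge stabilizer, which is constant under further restriction, giving stability automatically. In the second case I would apply a standard maximality argument: Lemma \ref{l:uniformbound} gives a uniform bound $N$ on the order of the pointwise stabilizer of any arc containing two inseparable cut pair vertices. Since $K \subset J$ implies $\mathrm{Stab}(J) \subseteq \mathrm{Stab}(K)$, pick $J \subset I$ realizing the maximum of $|\mathrm{Stab}(J)|$ among such sub-arcs. Then for any non-degenerate $K \subset J$, containment plus maximality plus the bound $N$ force $\mathrm{Stab}(K) = \mathrm{Stab}(J)$, which is exactly the stability condition.

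The only thing that might need care is ensuring that every non-degenerate arc $I$ of $\cT$ admits a sub-arc of one of the two types above; this is immediate from the construction of $\cT$ as the cut-point tree with cyclic/inseparable blow-ups, since the set of vertices of each type is dense in its corresponding region. There is no real obstacle once Lemma \ref{l:uniformbound} is in hand, which is why the corollary is stated without further argument in the main text.
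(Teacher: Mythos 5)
Your proposal is correct and follows the same route as the paper: the paper likewise disposes of arcs in the simplicial cut-point part and derives stability directly from the uniform bound of Lemma \ref{l:uniformbound} (its proof ends with exactly this deduction, stated as ``it follows immediately''). Your explicit maximality argument---choosing $J$ with $|\mathrm{Stab}(J)|$ maximal so that nesting of stabilizers forces equality on sub-arcs---is just a fleshed-out version of that one-line conclusion.
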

\begin{lem}\label{l:nonnesting}
	The action of $\gG$ on $\cT$ is non-nesting.
\end{lem}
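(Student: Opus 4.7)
The plan is to argue by contradiction. Suppose $g \in \gG$ and $I \subset \cT$ satisfy $g(I) \subsetneq I$, where $I$ contains at least two inseparable cut-pair vertices $A$ and $B$. Iterating shows $g^n(I) \subsetneq I$ for all $n \ge 1$, so $g$ has infinite order; by the convergence action of $\gG$ on $\partial\gG$, $g$ is therefore either parabolic or hyperbolic. The central observation driving the proof is that $g$ fixes at most one inseparable cut-pair vertex of $\cT$: two distinct fixed cut-pair vertices would force $g^2$ to fix pointwise the union of their underlying inseparable cut pairs on $\partial\gG$, a set of three or four points, contradicting the fact that an infinite-order element of a convergence group has at most two fixed points.

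Next I would analyze the $\langle g \rangle$-orbits of $A$ and $B$. If both orbits are finite, then a common power $g^M$ (still of infinite order) fixes both cut-pair vertices simultaneously, directly contradicting the central observation. Otherwise, without loss of generality, the orbit of $B$ is infinite, producing infinitely many distinct cut-pair vertices $g^n(B) \in I$. Applying the convergence property to a subsequence $\{g^{n_k}\}$ with attracting point $s$ and repelling point $r$ on $\partial\gG$, and after replacing $B$ by $g^j(B)$ if necessary to ensure that the inseparable cut pair underlying $B$ misses $\{r\}$, both points of this cut pair converge to $s$ under $g^{n_k}$. The corresponding vertices $g^{n_k}(B)$ then converge in the $\bbR$-tree $\cT$ to the $V$-point $v_s$ whose associated subset of $\partial\gG$ contains $s$, and since $I$ is closed and $g(s) = s$, we have $v_s \in I$ and $g(v_s) = v_s$.

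The same convergence analysis applied to $A$ (either $A$ is already $g$-fixed, or else $g^{n_k}(A) \to v_s$ as well) produces a sequence of sub-arcs $[g^{n_k}(A), g^{n_k}(B)] \subset I$, each containing two inseparable cut-pair vertices yet collapsing in the $\bbR$-tree to the single point $v_s$. Lemma~\ref{l:uniformbound} gives a uniform bound on the setwise stabilizers of such arcs, and Lemma~\ref{l:conjclass} constrains their finite-order contents to finitely many $\gG$-conjugacy classes. A pigeonhole argument then extracts an element of $\gG$ that either manifests as a second $g$-fixed inseparable cut-pair vertex --- contradicting the central observation --- or violates the uniform stabilizer bound. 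The main obstacle is this last step: one must work through the possible types of $v_s$ (cut-point, cut-pair, necklace, or maximal-inseparable-set vertex) to cleanly produce the contradicting element in each case, using that in all but the cut-pair case, the stabilizer structure dictated by the definitions of peripheral and relatively QH subgroups is incompatible with an infinite-order convergence element contracting an arc into itself.
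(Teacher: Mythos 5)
Your proposal is not a complete proof: you explicitly leave the decisive step (``A pigeonhole argument then extracts an element \dots'') as ``the main obstacle,'' and that is precisely where the contradiction would have to be manufactured. Everything before it is set-up; without the case analysis over the type of $v_s$ actually carried out, no contradiction has been reached. There are also two earlier points that would need justification even if the last step were filled in. First, you restrict at the outset to intervals containing two inseparable cut-pair vertices; the remark at the start of Section~\ref{s:Rips} that licenses such a restriction concerns \emph{stability}, not nesting, so you would need a separate argument that a nested interval in the combined tree must contain two inseparable cut-pair vertices (the cut-point part is simplicial and the action there is by tree automorphisms, which handles part of this, but the blown-up necklace blocks need care). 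Second, the inference that convergence of the underlying cut pairs to the single boundary point $s$ forces the vertices $g^{n_k}(B)$ to converge in the $\bbR$-tree to a point $v_s\in I$ is a nontrivial claim about the topology of the tree of \cite{PaSw1}, and $s$ need not correspond to any vertex at all; this is asserted, not proved.

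For comparison, the paper's argument is much more direct and avoids all of this machinery: after replacing $g$ by $g^2$, the nesting $g(I)\subsetneq I$ yields a fixed endpoint $A$ of $I=[A,B]$ with $g(B)\in[A,B)$, so $g$ has infinite order; the convergence property gives attracting and repelling points $p,q\in\partial\gG$ for $\langle g\rangle$, and one checks that both $p$ and $q$ must lie in the subset of the continuum corresponding to the fixed vertex $A$ (if $q$ lay in a vertex $C\in(A,B)$ then $C$ could not be moved off itself by $g^{-n}$); but then $g^{-n}(x)\to q\in A$ for all $x\neq p$ forces $g^{-n}(B)$ back toward $A$, contradicting $g^{-n}(B)\notin[A,B]$. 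You should either adopt that route or supply the missing case analysis and the two justifications above; as written, the argument does not close.
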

\begin{proof}
	Assume not. Then there exists an interval $I\subset T$ and $g\in\gG$ (replaced by
	$g^2$ if necessary) such
	that $g(I)$ is a proper subset of $I$. By the Brouwer fixed point theorem, there
	is a fixed point of $g$ in $I$, call this $A$. We may assume $I=[A,B]$,
	$g(B)\in[A,B)$. Note that $g$ has infinite order.
	
	By convergence, there exists $p,q\in\partial \gG$ such that $g^n(x)\to p$ for
	every $x\neq q$. Clearly, $p\in A$. Because $g^{-n}(x)\to q$, also $q\in A$ otherwise
	$q\in C\in (A,B)$ but $g^{-n}(C)\neq C$.
	However, this implies that for all $x\neq p$, $g^{-n}(x)\to q\in A$. Yet
	$g^{-n}(B)\not\in[A,B]$ for any $n$, a contradiction.
\end{proof}

\begin{lem}\label{l:minimal}
	This action is also minimal.
\end{lem}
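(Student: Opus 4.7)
The plan is to show that every non-empty $\gG$-invariant subtree $T_0 \subseteq \cT$ equals all of $\cT$. Since $T_0$ is convex and $\gG$-invariant, after fixing some $v \in T_0$ it suffices to prove that an arbitrary vertex $u \in \cT$ lies on some arc $[g_1 v, g_2 v]$ with $g_1, g_2 \in \gG$; the $\gG$-invariance of $T_0$ then forces $u \in T_0$.

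First I would exploit the construction of $\cT$ to identify, for each vertex $u$, the components $\cT_1, \cT_2, \ldots$ of $\cT \setminus \{u\}$ with pairwise disjoint open subsets $U_1, U_2, \ldots \subseteq \partial(\gG,\cA)$ whose union is dense in the boundary: concretely, $U_i$ collects those boundary points whose $\sim$-class or $\cR$-class sits in $\cT_i$. To $v$ I similarly attach a non-empty $\Omega(v) \subseteq \partial(\gG,\cA)$ --- a single boundary point when $v$ is a cut-point vertex, otherwise the inseparable set or necklace corresponding to $v$ --- so that a translate $g \cdot v$ lies in $\cT_i$ whenever $g \cdot \Omega(v)$ meets $U_i$.

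The key step is to invoke the standard fact that the convergence action of a non-elementary relatively hyperbolic group on $\partial(\gG,\cA)$ is minimal, i.e.\ every boundary orbit is dense. Fixing any $p \in \Omega(v)$ and any two distinct components $\cT_1, \cT_2$ of $\cT \setminus \{u\}$, density supplies $g_1, g_2 \in \gG$ with $g_i(p) \in U_i$, hence $g_i \cdot v \in \cT_i$; the geodesic $[g_1 v, g_2 v]$ then crosses $u$, placing $u$ inside $T_0$, as required.

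The main obstacle I expect is the bookkeeping that translates topological separation of $\partial(\gG,\cA)$ at the feature corresponding to $u$ into combinatorial separation of $\cT$ at $u$, particularly at necklace vertices, where the correspondence between tree components and boundary pieces is mediated by a whole family of cut pairs rather than a single separating set. A secondary technicality is ruling out the degenerate situation in which $g_i(p)$ lands on the separating object at $u$ itself rather than inside a prescribed component; since the stabilizer of any vertex of $\cT$ lying on an arc through two inseparable cut pairs is finite by Lemma \ref{l:uniformbound} while the $\gG$-orbit of $p$ is infinite, generic choices of $g_i$ avoid this.
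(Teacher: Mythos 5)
Your route is genuinely different from the paper's: the paper argues that cut points and inseparable cut pairs are stabilized by infinite (peripheral, resp.\ two-ended) subgroups, so that a 2-dense family of vertices is forced into any invariant subtree, whereas you push translates of a fixed vertex $v$ to either side of an arbitrary vertex $u$ using density of $\gG$-orbits in $\partial(\gG,\cA)$. The density input is legitimate (the convergence action of a non-elementary relatively hyperbolic group on its Bowditch boundary is minimal), and the reduction to ``every $u$ lies on some $[g_1v,g_2v]$'' is the right shape of argument. However, the asserted implication ``$g\cdot v$ lies in $\cT_i$ whenever $g\cdot\Omega(v)$ meets $U_i$'' is false as stated: if $g\cdot\Omega(v)$ is a necklace or maximal inseparable set whose closure contains the separating feature at $u$, it can meet several of the $U_i$ while $g\cdot v$ sits in exactly one component of $\cT\setminus\{u\}$ (your implication would place it in $\cT_1\cap\cT_2=\emptyset$). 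This is repairable by taking $v$ to be a cut-point or inseparable-cut-pair vertex, whose associated set cannot be split by the feature at $u$ (inseparability, together with Lemma \ref{l:cpsepcp}), but then you have only placed the separating-type vertices in $T_0$.

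The more serious gap is that the argument presupposes $\cT\setminus\{u\}$ has at least two components for \emph{every} vertex $u$. For cut-point and cut-pair vertices this is immediate from the definition of separation, but for necklace and maximal-inseparable-set vertices it is precisely the content of minimality: a terminal vertex of either type could be pruned equivariantly to yield a proper invariant subtree, and neither density of orbits nor Lemma \ref{l:uniformbound} excludes this. You need to show that each such set has at least two distinct cut points or cut pairs in its closure --- equivalently, that it lies between two separating features --- which is where one-endedness and the maximality built into $\cR$ must enter; this step is absent. By contrast, your ``secondary technicality'' is a non-issue: each $U_i$ is a non-empty open set, so a dense orbit meets it, and no genericity or finite-stabilizer argument is required.
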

\begin{proof}
	Every cut point is stabilized by a peripheral subgroup and
	every cut pair is stabilized by a finite or two-ended subgroup.
	Since the group is one-ended, cut pair stabilizers must be two-ended, showing
	that a 2-dense collection of vertices of the cut point tree and of 
	every cut pair tree have non-trivial
	stabilizers. Thus, no proper invariant subtree exists.
\end{proof}

\begin{thm}\label{t:simptree}
	Let $\gG$ be a finitely presented, one-ended group, hyperbolic relative to $\cA$ such that for
	every $A\in\cA$, $A$ is not properly relatively hyperbolic and $A$ contains no
	infinite torsion subgroup. Let $\cT$ be the combined 
	tree obtained by the action of $\gG$ on its Bowditch boundary.
	Then $\cT$ is simplicial.
\end{thm}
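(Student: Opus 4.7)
The plan is to apply the Rips machine to the $\bbR$-tree $\cT$ and to rule out each of the non-simplicial pieces in the resulting decomposition using the structural lemmas just established. All of the necessary input has been assembled: $\gG$ is finitely presented (and in particular finitely generated); the action is stable by Corollary \ref{c:stable}, non-nesting by Lemma \ref{l:nonnesting}, and minimal by Lemma \ref{l:minimal}. Moreover, by Lemma \ref{l:uniformbound} the stabilizer of any arc containing at least two cut-pair vertices is finite of uniformly bounded order, and by Lemma \ref{l:conjclass} only finitely many conjugacy classes of such subgroups occur in $\gG$.

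First I would invoke Levitt's theorem on non-nesting actions to promote the action on $\cT$ to an isometric action on an $\bbR$-tree with the same orbit and arc structure, reducing to the isometric setting. With this in place, the Bestvina--Feighn structure theorem for stable actions of finitely presented groups on $\bbR$-trees decomposes $\cT$ equivariantly into simplicial, axial (toral), and exotic (IET/Levitt) pieces. To prove $\cT$ is simplicial it then suffices to show that no axial or exotic component appears.

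Second, I would eliminate axial components. Any such component provides an element $g\in\gG$ acting as a nontrivial translation on a line $L\subset\cT$, whose endpoints in $\partial\gG$ are forced to be the two fixed points of the convergence action of $g$. This is essentially the obstruction already exploited in the proof of Lemma \ref{l:nonnesting}: the convergence dynamics force all points other than one endpoint to be mapped arbitrarily close to the other, which is incompatible with $g$ acting without fixed points on a nontrivial sub-arc of $L$. Third, I would eliminate exotic components. These are characterized by arcs whose stabilizers change erratically under passage to subarcs, or by the Rauzy-type mixing of arc orbits; either behavior is forbidden by the uniform finiteness of stabilizers of long arcs (Lemmas \ref{l:uniformbound} and \ref{l:conjclass}) together with the fact that the construction of $\cT$ in \cite[Section 5]{PaSw1} already collapses each maximal cyclic subset (necklace) in $\cR$ to a single vertex before the Rips machine is ever applied.

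The main obstacle will be the exotic case. The axial case is an almost verbatim repetition of the convergence argument of Lemma \ref{l:nonnesting}, applied to a translation instead of to a nesting element. Ruling out IET/Levitt behavior is subtler: one must verify that no residual \emph{hyperbolic--hyperbolic} splitting survives the necklace-collapsing step, which requires a careful comparison between the pieces produced by the Rips machine on the cut-pair subtrees and the $\cR$-structure on the underlying non-cut-point subcontinua, so that every potential non-simplicial stratum has already been identified to a point at the level of the input continuum.
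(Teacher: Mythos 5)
Your overall strategy (Levitt's theorem to pass to an isometric action on an $\bbR$-tree $\cT'$, then the Bestvina--Feighn machine) matches the paper's, and your dispatching of the axial and thin/Levitt components is broadly consistent with how the paper handles them --- there the point is simply that in every case of \cite[Theorem 9.5]{BeFe1} other than the pure surface case one obtains a splitting of $\gG$ over a finite group, which contradicts one-endedness. The genuine gap is in your third step: you have folded the surface (IET) stratum into the ``exotic'' stratum and claimed it is forbidden by the uniform finiteness of stabilizers of long arcs. It is not. A group acting on the dual tree of a filling measured lamination on a hyperbolic $2$-orbifold acts minimally, stably and non-simplicially with \emph{trivial} arc stabilizers, so neither Lemma \ref{l:uniformbound} nor Lemma \ref{l:conjclass}, nor the collapsing of necklaces in the construction of $\cR$, can rule this stratum out. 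This is precisely the case that requires the one genuinely new argument in the paper's proof, and your proposal only acknowledges that it is ``subtler'' without supplying that argument.

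The missing argument is: in the surface case, \cite[Theorems 9.4(1) and 9.5]{BeFe1} produce a splitting of $\gG$ over a two-ended subgroup $V$ corresponding to an essential, non-boundary-parallel simple closed curve on the associated orbifold; since the lamination has no closed leaves, an element $g\in V$ corresponding to that curve acts hyperbolically on $\cT'$, hence on $\cT$; but a splitting of $\gG$ over a two-ended subgroup forces the endpoints of the axis of $\langle g\rangle$ to form a cut pair of $\partial\gG$, hence a vertex of $\cT$ stabilized by $g$, so $g$ is elliptic --- a contradiction. Note that the crucial link is that the cut pair comes from the \emph{splitting} produced by the structure theorem, not merely from $g$ translating along a line in $\cT$; your axial-case sketch gestures at the same dynamical tension but omits this link, and the convergence argument of Lemma \ref{l:nonnesting} does not apply verbatim to a translation (there is no nested interval and no Brouwer fixed point). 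Without the surface-case argument the theorem is not proved.
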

\begin{proof}
	
	Suppose not. Then because the action is non-nesting, by \cite{Le1}, 
	there is an $\bbR$-tree $\cT'$ equipped with an isometric $\gG$-action 
	and an equivariant quotient map $\cT\to \cT'$. Furthermore, stabilizers of 
	non-simplicial segments
	in $\cT'$ stabilize segments in $\cT$, and so are finite of uniformly bounded order 
	by Lemma \ref{l:uniformbound}.
	Therefore, as in Corollary \ref{c:stable}, the $\gG$-action is stable.

In all cases of \cite[Theorem 9.5]{BeFe1} other than the pure surface case,
one obtains a splitting over a finite group.  However, $\gG$ is
one-ended, so we reduce to this case.  By \cite[Theorems 9.4(1) \& 9.5]{BeFe1}
$\gG$ admits a splitting over a two-ended group $V$, and
this two-ended group corresponds to an essential, non-boundary
parallel simple closed curve in the associated orbifold.  If $g \in V$
corresponds to this curve, then since the associated lamination on the
orbifold has no closed leaves $g$ must act hyperbolically on $\cT'$.
This implies that $g$ also acts hyperbolically on $\cT$.  However, a
splitting of $\gG$ over a two-ended group must induce a cut pair
corresponding to the endpoints of the axis of $\langle g \rangle$.
This cut pair must be stabilized by $g$, so $g$ cannot act
hyperbolically.  This is a contradiction.
\end{proof}

In summary, the combined tree $\cT$ is simplicial and has one vertex for each of the following topological
structures in the continuum:
\begin{enumerate}
	\item cut points
	\item inseparable cut pairs
	\item necklaces
	\item equivalence classes of points not separated by cut points or cut pairs
\end{enumerate}

\noindent Additionally, there is an edge between two vertices if the corresponding sets in the continua
have intersecting closures. We note that, by the construction of $\cR$, points of the continuum
can be contained in multiple elements of $\cR$.

\section{$\cT(\gG,\cA)$ is a JSJ-Tree}\label{s:JSJ}

Now that we know that $\cT$ is simplicial in common situations, we are left
with the task of classifying it as a JSJ tree. This effectively labels the splitting
as the ideal splitting for understanding the algebraic content of the group.

\begin{thm}\label{t:JSJ}
	With $\gG,\cA$ as in Theorem \ref{t:simptree}, $\cT$
	is a JSJ tree over elementary subgroups relative to peripheral subgroups.
\end{thm}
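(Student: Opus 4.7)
The plan is to verify the four conditions of Definition \ref{d:JSJ} one at a time, after first identifying the stabilizers of the vertices and edges of $\cT$. By the classification at the end of Section \ref{s:Rips}, vertices of $\cT$ come in four types (cut points, inseparable cut pairs, necklaces, equivalence classes of points with no separating cut point or cut pair), and edges only occur between a vertex of type (i) and a vertex of type (iii) or (iv), or between a vertex of type (ii) and one of types (iii)--(iv). The stabilizer of each such edge is contained in the stabilizers of its endpoints; cut points are parabolic fixed points of peripheral subgroups, and inseparable cut pairs are fixed setwise by two-ended hyperbolic subgroups (using Theorem \ref{t:2endsrq} and Lemma \ref{l:uniformbound} together with the one-endedness of $\gG$). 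Thus every edge stabilizer is either peripheral or two-ended, verifying condition (1). For condition (4), a peripheral subgroup $P = gAg^{-1}$ has a unique parabolic fixed point in $\partial \gG$, and under the hypothesis that no $A \in \cA$ is properly relatively hyperbolic or contains an infinite torsion subgroup this fixed point is a cut point of $\partial \gG$; hence $P$ stabilizes the corresponding vertex of type (i).

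For condition (2) (universal ellipticity), let $e$ be an edge of $\cT$ with stabilizer $H$, and let $T'$ be any other $\gG$-tree satisfying (1). If $H$ is peripheral, then $H$ fixes a vertex of $T'$ by (4) applied to $T'$. The interesting case is when $H$ is two-ended with limit set an inseparable cut pair $\{a,b\}$. Suppose for contradiction that $H$ acts hyperbolically on $T'$, so some infinite-order $h \in H$ translates along an axis $L \subset T'$. Removing any edge of $T'/\gG$ gives a two-sided splitting of $\gG$ over an elementary subgroup, and by the convergence action on $\partial \gG$, such a splitting induces either a cut point (in the peripheral case) or a cut pair (in the two-ended case) in $\partial \gG$. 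Running through the edges along $L$, the resulting cut pairs would separate $a$ from $b$ in the complement of the fixed-point set of $H$, contradicting the fact that $\{a,b\}$ is an inseparable cut pair (and that $h$ fixes $\{a,b\}$ pointwise after passing to a power). Therefore $H$ is elliptic in $T'$.

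For condition (3) (maximal for domination), let $T'$ be any tree satisfying (1) and (2), and let $v$ be a vertex of $\cT$ with stabilizer $\gG_v$. One analyzes each vertex type: for type (i) cut points, $\gG_v$ is peripheral and so fixes a vertex of $T'$ by (4). For type (ii) inseparable cut pairs, $\gG_v$ is two-ended and is itself an edge stabilizer of $\cT$, hence elliptic in $T'$ by the previous paragraph. For types (iii) and (iv), the argument is that if $\gG_v$ did not fix a vertex of $T'$, it would act non-trivially on a minimal subtree, producing a splitting of $\gG_v$ over an elementary subgroup; pulling this back to a splitting of $\gG$ relative to $\cA$, we would obtain a new cut pair or cut point of $\partial \gG$ contained in the necklace or inseparable class corresponding to $v$, contradicting the definitions of these sets (a necklace has no internal inseparable cut pairs cross-cutting it beyond its cyclic structure, and an inseparable class has no separating cut pairs at all).

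The main obstacle is the last step: controlling what a non-trivial action of a necklace or rigid vertex stabilizer on $T'$ implies topologically on $\partial \gG$. For the rigid (type (iv)) vertices this is a clean translation --- any new splitting would produce a new separating cut pair inside an inseparable class. For necklaces (type (iii)) one must argue that the \emph{only} cut pairs compatible with the necklace's cyclic structure are those already recorded by the neighboring type (ii) vertices of $\cT$, so that no genuinely new splitting of $\gG_v$ relative to its incident edge groups and the peripheral structure exists beyond those detected by $\cT$; this uses the relative QH structure promised by Definition \ref{d:QH} and the convergence-group identification of necklace stabilizers with virtually Fuchsian groups (Theorem \ref{t:convergence}).
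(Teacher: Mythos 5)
Your proposal is correct and follows essentially the same route as the paper: both verify the four conditions of Definition \ref{d:JSJ} by translating elementary splittings into cut points and cut pairs of $\partial\gG$ via the convergence action, and conversely. You spell out the case analysis more explicitly (in particular isolating the necklace/QH vertices as the delicate point for conditions (2) and (3)), whereas the paper compresses the same argument into citations of \cite{Bo5} and \cite{Pa1}.
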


\begin{proof} We show that $\cT$ satisfies the conditions of Definition \ref{d:JSJ}. 
By the construction of the tree every edge group must be the stabilizer
of either a cut point or a cut pair. Because relatively hyperbolic groups act on their
boundaries with a convergence action, these stabilizers must be elementary
subgroups (condition (1)). Every peripheral subgroup fixes a point in the tree because it fixes
a point in the boundary (this point is $e_A$ of Lemma \ref{l:horoballboundary}), which implies 
condition (4). Furthermore, this tree satisfies (3) because every
elementary splitting always has a topological expression in the boundary. In particular,
\cite{Bo5} implies the existence of a cut point and \cite{Pa1} implies the existence
of a cut pair whenever there is an peripheral or hyperbolic two-ended 
splitting, respectively. Thus, every vertex
in every such tree comes from one of these structures and hence is already a vertex
stabilizer in $T$. Finally, every splitting of this group must reflect the 
topology of the boundary. In particular, every edge represents the intersection
of topological features. Fixing these features means fixing a vertex in a tree
for another splitting. In essence, the argument that this tree satisfies (3) goes
in both directions. This is (2).
\end{proof}

\section{Induced Quasi-Isometries of Cusped Spaces}

	In this section we prove that quasi-isometries of Cayley graphs induce quasi-isometries
	of cusped spaces:

\medskip

\noindent{\bf Theorem \ref{t:cuspqi}.}  
{\em 
Let $\gG_1$ and $\gG_2$ be finitely generated groups. Suppose that $\gG_1$ 
		is hyperbolic relative to a finite collection $\cA_1$
		such that that no $A\in\cA$ is properly relatively hyperbolic. Let 
		$q:\gG_1\to\gG_2 $ be a quasi-isometry of groups. 
		Then there exists $\cA_2$, a collection of subgroups of $\gG_2$, such
		that the cusped space of $(\gG_1,\cA_1)$ is quasi-isometric to that of $(\gG_2,\cA_2)$.

}
\medskip

We first show that horoballs of quasi-isometric spaces are themselves quasi-isometric.
To that end, we distinguish among types of geodesics which exist in horoballs. We assume
that $n_2>n_1$.

\begin{defn}
	Let $\hat{T}$ be a horoball over the graph $T$ with $(t_1,n_1)$ and
	$(t_2,n_2)$ vertices of $\hat{T}$. We say $[(t_1,n_1),(t_2,n_2)]$ is
	\emph{vertical} or a \emph{vertical geodesic} 
	if $n_2$ is the maximal depth among vertices of $[(t_1,n_1),(t_2,n_2)]$. See Figure \ref{f:vertgeod}.
\end{defn}

	\begin{figure}[h]
	\centering
	\includegraphics[width=0.9\textwidth]{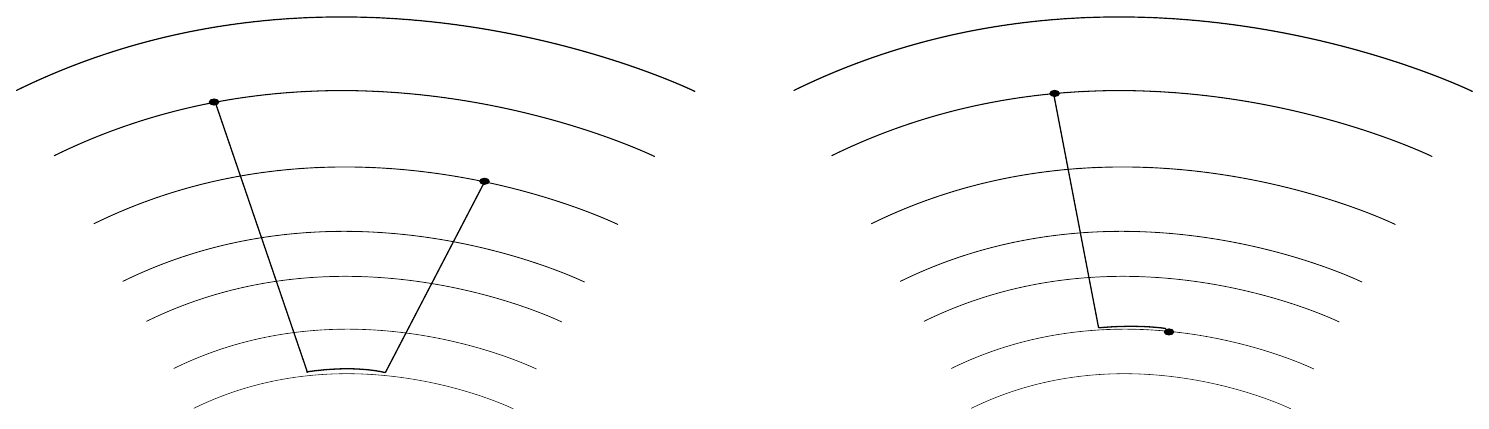}

	\caption{A vertical geodesic (right) and a non-vertical geodesic.\label{f:vertgeod}}	\end{figure}

\begin{lem}\label{l:horoballqi}Let $q:T\to S$ a $(k,c)$-quasi-isometry between graphs. 
	There is a $(1,C)$-quasi-isometry
	$\hat{q}:H(T)\to H(S)$ between combinatorial horoballs
	such that $\hat{q}$ extends $q$. Furthermore, $C$ 
	depends only on $k$ and $c$.
\end{lem}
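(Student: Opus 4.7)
The plan is to define $\hat{q}$ on vertices by $\hat{q}(t,n) := (q(t), n)$ and extend it across edges in the obvious way (so it clearly restricts to $q$ on the depth $0$ subgraph). The central observation is that $\hat{q}$ preserves depth \emph{exactly}, so the multiplicative constant can be taken to be $1$; the $(k,c)$ constants of $q$ will only contribute additive error because horoball distance depends logarithmically on the base-graph distance.

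First I would record the standard distance formula for combinatorial horoballs (essentially \cite[Lemma 3.10]{GrMa1}): for two vertices $(t_1, n_1), (t_2, n_2) \in \cH(T)$, there is a universal constant $C_0$ such that
\[
\bigl|\,d_{\cH(T)}\!\bigl((t_1,n_1),(t_2,n_2)\bigr) \;-\; \bigl(|n_1-n_2| + 2\max\{0,\lceil\log_2 d_T(t_1,t_2)\rceil - \min(n_1,n_2)\}\bigr)\bigr| \le C_0.
\]
This comes from the fact that any geodesic in the horoball ascends from its endpoints to a common level $m \approx \log_2 d_T(t_1,t_2)$ (or stays at $\min(n_1,n_2)$ if already high enough), crosses horizontally in a bounded number of edges, then descends. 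The same formula, with the same $C_0$, holds in $\cH(S)$.

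Next I would compare the two distances using this formula. The vertical contributions $|n_1 - n_2|$ match exactly under $\hat{q}$, so the entire discrepancy is driven by comparing $\log_2 d_T(t_1,t_2)$ with $\log_2 d_S(q(t_1), q(t_2))$. Because $q$ is a $(k,c)$-quasi-isometry,
\[
\tfrac{1}{k}\,d_T(t_1,t_2) - c \;\le\; d_S(q(t_1),q(t_2)) \;\le\; k\,d_T(t_1,t_2) + c,
\]
and taking $\log_2$ of both sides (handling separately the bounded regime $d_T \le k c + 1$ where both distances are $O(kc)$) gives $\bigl|\log_2 d_T(t_1,t_2) - \log_2 d_S(q(t_1),q(t_2))\bigr| \le \log_2 k + C_1(k,c)$ for an explicit constant $C_1$. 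Plugging this into the two distance formulas yields
\[
\bigl|\,d_{\cH(T)}\!\bigl((t_1,n_1),(t_2,n_2)\bigr) - d_{\cH(S)}\!\bigl(\hat q(t_1,n_1),\hat q(t_2,n_2)\bigr)\bigr| \;\le\; 2\log_2 k + 2C_1(k,c) + 2C_0 \;=:\; C,
\]
so $\hat q$ is a $(1,C)$-quasi-isometric embedding.

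Finally I would verify coarse surjectivity. Let $R$ be the coarse density constant of $q(T)$ in $S$. Given any $(s,n) \in \cH(S)$, pick $t \in T$ with $d_S(q(t), s) \le R$. Then $(q(t), n)$ and $(s, n)$ lie at depth $n$ and at base-graph distance $\le R$, so a single horizontal edge (at any level $\ge \log_2 R$) together with a bounded vertical detour gives $d_{\cH(S)}((q(t),n),(s,n)) \le 2\log_2 R + O(1)$, which is independent of $(s,n)$ and depends only on $k,c$. Absorbing this into $C$ completes the proof. The main obstacle, as this outline suggests, is not any single estimate but the careful bookkeeping that ensures the multiplicative constant is exactly $1$: this depends entirely on the fact that the horoball metric is essentially $|n_1 - n_2|$ plus twice a $\log$ term, so that an affine distortion of $T$-distance gets compressed into an $O(\log k + \log c)$ additive error.
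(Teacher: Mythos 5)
Your overall strategy is the same as the paper's: define $\hat q(t,n)=(q(t),n)$, observe that depth is preserved exactly, and reduce everything to the fact that the horoball metric sees the base distance only through its logarithm, so the $(k,c)$ distortion of $q$ collapses to an additive $O(\log(k+c))$ error. The paper carries this out by a case analysis on which of the two geodesics is ``vertical''; you instead invoke a single two-sided distance formula, which is a cleaner packaging of the same estimate and handles the mixed cases automatically via the $\max\{0,\cdot\}$.

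However, the distance formula you state --- the load-bearing step --- is wrong. The correct approximation is
\[
d_{\cH(T)}\bigl((t_1,n_1),(t_2,n_2)\bigr)\;=\;|n_1-n_2|+2\max\bigl\{0,\ \lceil\log_2 d_T(t_1,t_2)\rceil-\max(n_1,n_2)\bigr\}\;\pm\;C_0,
\]
with $\max(n_1,n_2)$, not $\min(n_1,n_2)$: a geodesic descends from each endpoint to a common level $m\approx\log_2 d_T(t_1,t_2)$, so the vertical travel is $(m-n_1)+(m-n_2)=|n_1-n_2|+2\bigl(m-\max(n_1,n_2)\bigr)$, and a single horizontal crossing already becomes available at depth $\max(n_1,n_2)$ when $d_T\le 2^{\max(n_1,n_2)}$. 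Your version overestimates the distance by $2|n_1-n_2|$, which is unbounded: for $n_1=0$, $n_2=10$, $d_T(t_1,t_2)=2^{20}$ the true distance is about $30$ while your formula gives $50$. So, as written, the premise of your comparison is false. The repair costs nothing: the corrected formula still depends on the base distance only through a $2$-Lipschitz function of $\log_2 d$, and the depths $n_1,n_2$ agree on both sides of $\hat q$, so your bound $|d_{\cH(T)}-d_{\cH(S)}|\le 2|\log_2 d_T-\log_2 d_S|+2C_0$ and your coarse-surjectivity argument (which is actually more careful than the paper's one-line dismissal) go through verbatim. Two trivial further points to patch: the degenerate cases $t_1=t_2$ or $q(t_1)=q(t_2)$, where the logarithm is undefined, and the phrase ``both distances are $O(kc)$'' should refer to the base-graph distances $d_T,d_S$ rather than the horoball distances, which still contain the unbounded term $|n_1-n_2|$.
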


\begin{proof}
	Extend $q$ to $\hat{q}$ by defining $\hat{q}(v,n)=(q(v),n)$ and let $s_i=q(t_i)$.
	The proof proceeds by comparing lengths of geodesics; we show that the length of a
	segment in $H(S)$ is
	less than a linear function of the length of the corresponding segment in $H(T)$.
	Since we start with a quasi-isometry, this argument is also valid in the reverse direction.
	Moreover, the quasi-inverse has coefficients which obey the same dependencies which establishes the proof. We partition the proof into cases by which
	geodesics are vertical.
	
	\emph{$[(s_1,n_1),(s_2,n_2)]$ vertical:} $d_{H(S)}\in\{n_2-n_1,n_2-n_1+1,n_2-n_1+2,n_2-n_1+3\}$ 
	and $d_{H(T)}+3$ is clearly at least as large.
	
	For the remaining two cases we assume that $[(s_1,n_1),(s_2,n_2)]$ is not vertical.
	
	\emph{$[(t_1,n_1),(t_2,n_2)]$ not vertical:}
\begin{align}
	d_{H(S)}((s_1,n_1),(s_2,n_2)) &\leq2\log_2\left[d_S(s_1,s_2)\right]+3-n_2-n_1\nonumber\\
	&\leq2\log_2[kd_T(t_1,t_2)+c]+3-n_2-n_1\nonumber\\
	&\leq2\log_2[(k+c)d_T(t_1,t_2)]+3-n_2-n_1\nonumber\\
	&\leq2\log_2(k+c)+2\log_2[d_T(t_1,t_2)]+3-n_2-n_1\nonumber\\
	&\leq\hat{d}_T((t_1,n_1),(t_2,n_2))+2\log_2(k+c)+3\nonumber
\end{align}

	\emph{$[(t_1,n_1),(t_2,n_2)]$ vertical:}
\begin{align}
	d_{H(S)}((s_1,n_1),(s_2,n_2))&\leq2\log_2(d_S(s_1,s_2))-n_2-n_1+3\nonumber\\
	&\leq 2\log_2(k+c)+2\log_2(d_T(t_1,t_2))-n_2-n_1+3\nonumber\\
	&\leq2\log_2(k+c)+n_2-n_1+3 \\
	&\leq2\log_2(k+c)+\hat{d}_T(t_1,t_2)+3 \nonumber
\end{align}	
	Since $[t_1,t_2]$ is not descending, $\log_2(d_T(t_1,t_2))\leq n_2$, justifying (2.1).
	
	The coarse density of the image is clear. Since $q$ is a 
	quasi-isometry, we can also get identical results in the reverse direction
	with a symmetric argument so that geodesics in $H(T)$ are bounded
	by a linear function of the lengths in $H(S)$.
	Thus, $\hat{q}$ is a $(1,2\log_2(k+c)+3)$-quasi-isometry. 
		\end{proof}

	\begin{thm} \label{t:cuspqi}
		Let $\gG_1$ and $\gG_2$ be finitely generated groups. Suppose that $\gG_1$ 
		is hyperbolic relative to a finite collection $\cA_1$
		such that that no $A\in\cA$ is properly relatively hyperbolic. Let 
		$q:\gG_1\to\gG_2 $ be a quasi-isometry of groups. 
		Then there exists $\cA_2$, a collection of subgroups of $\gG_2$, such
		that the cusped space of $(\gG_1,\cA_1)$ is quasi-isometric to that of $(\gG_2,\cA_2)$.

	\end{thm}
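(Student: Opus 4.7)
The plan is to extend $q$ to a map $\hat{q}$ of cusped spaces by matching horoballs via Drutu's quasi-isometric classification \cite{Dr1} and then extending the matching coset-wise using Lemma \ref{l:horoballqi}. The hypothesis that no $A\in\cA_1$ is properly relatively hyperbolic is exactly what enables this matching.

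First, I would invoke Drutu's theorem to produce a collection $\cA_2$ of subgroups of $\gG_2$, each quasi-isometric to some $A\in\cA_1$, such that $\gG_2$ is hyperbolic relative to $\cA_2$ and $q$ induces a bijection on peripheral cosets: for each coset $gA$ of an element of $\cA_1$, there is a unique peripheral coset $g'A'$ of $\gG_2$ with $q(gA)$ at uniform Hausdorff distance from $g'A'$, and the induced map $gA\to g'A'$ is a quasi-isometry with constants $(k,c)$ independent of the coset. Second, for each matched pair I would apply Lemma \ref{l:horoballqi} to extend the coset-wise quasi-isometry to a $(1,C)$-quasi-isometry $\hat{q}_{gA}:\cH(gA)\to\cH(g'A')$ between combinatorial horoballs, with $C$ depending only on $(k,c)$ and hence uniform over all cosets. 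The global map $\hat{q}:X(\gG_1,\cA_1)\to X(\gG_2,\cA_2)$ is then assembled by using $q$ on the Cayley graph part and $\hat{q}_{gA}$ on each horoball, absorbing the bounded depth-zero discrepancy between $q|_{gA}$ and $\hat{q}_{gA}|_{gA}$ into the additive constant.

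Third, I would show $\hat{q}$ is a quasi-isometry using the fact that every geodesic in a cusped space decomposes into maximal subsegments lying alternately in the Cayley graph and in individual horoballs, joined at depth-zero transition vertices. On each subsegment $\hat{q}$ agrees with either $q$ or some $\hat{q}_{gA}$, and the estimates on these pieces are uniform; concatenating them bounds the distance between $\hat{q}(x)$ and $\hat{q}(y)$ in $X(\gG_2,\cA_2)$ by a linear function of the distance from $x$ to $y$ in $X(\gG_1,\cA_1)$. The reverse inequality follows by applying the same argument to a quasi-inverse of $q$, which satisfies the hypotheses with $\cA_2$ in place of $\cA_1$, and coarse surjectivity is inherited from $q$ together with the coset bijection.

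The main obstacle I anticipate is uniformly controlling the contribution of horoball excursions to distance estimates across transitions. The $(1,C)$-constants of Lemma \ref{l:horoballqi} are critical here: with multiplicative constant $1$, horoball segments contribute no multiplicative distortion, preventing compounding error as a geodesic enters and leaves many horoballs in succession. Paired with the Drutu correspondence, which ensures that transition vertices line up to within uniformly bounded error, this makes it possible to paste Cayley-graph and horoball estimates into a single linear bound. The heart of the argument is therefore a combinatorial accounting showing that a geodesic visiting a long sequence of horoballs in $X(\gG_1,\cA_1)$ has an image that is a well-controlled concatenation of sub-geodesics in $X(\gG_2,\cA_2)$, matched coset-by-coset via the Drutu bijection.
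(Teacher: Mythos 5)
Your proposal follows essentially the same route as the paper's proof: the Dru\c{t}u coset correspondence (with the non-properly-relatively-hyperbolic hypothesis guaranteeing coarse surjectivity onto peripheral cosets), Lemma \ref{l:horoballqi} for uniform horoball quasi-isometries, the decomposition of geodesics into alternating depth-zero and horoball subsegments, and the quasi-inverse for the reverse bound. The only minor difference is in how the per-horoball additive error is controlled --- the paper converts it to a multiplicative constant using the fact that each horoball transversal has length at least $1$, rather than relying on the multiplicative constant $1$ from Lemma \ref{l:horoballqi} --- but this does not change the substance of the argument.
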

\begin{proof}
	We extend $q$ to a map $Q:X(\gG_1)\to X(\gG_2)$. First, let $Q=q$ on $\mathrm{Cay}(\gG_1)$. 
	By the proof of Theorem 5.12 of \cite{Dr1}, $q$ induces a quasi-isometric
	embedding of cosets of elements of $\cA_1$ into those of $\cA_2$ and
	we can take these to have uniform constants. We observe that this can
	be made coarsely surjective because no peripheral subgroup is properly
	relatively hyperbolic. The reason for this is that if we had 
	a quasi-isometry which was not coarsely surjective then this would
	give an element of $\cA_2$ that would be hyperbolic relative to the
	image of $q$ of some coset of an element of $\cA_1$, again by \cite{Dr1}, 
	contrary to our hypothesis. When the map is coarsely surjective, we get
	that this element is hyperbolic relative to itself, which is the trivial
	case that we allow.
	
	Now, $q$ might
	only take a coset to within a bounded distance of the corresponding coset in $\gG_2$, rather
	than directly to it as in Lemma \ref{l:horoballqi}.
	We can still use the induced quasi-isometry on the subsets of the horoballs which
	have positive depth but at depth 0 we have to make an adjustment. 
	
	However, the proof of \cite[Theorem 5.12]{Dr1} shows us that there exists a bound $T$ such that 
	the image of $A_i$ is at most $T$ from the coset to which it is quasi-isometric.
	Thus, we have to account only for 
	an extra additive $T$ in the constants. 
	
	Thus we know that $q$ induces quasi-isometries 
	between cosets of peripheral subgroups and that the constants of these quasi-isometries
	do not depend on the particular cosets but only on the constants of 
	$q$ and on $(\gG_1,\cA)$. Therefore, $Q$ restricts to a quasi-isometry on each individual horoball and on the Cayley graph. We only need to show that these can be combined
	across all of $X$.
	
	Now let $[x,y]$ be a geodesic arc between points of $X(\gG_1)$. We divide
	this arc into several subarcs by taking the collection of maximal subarcs $I_1$ which 
	have every vertex of depth 0 and also take the complimentary collection of 
	segments $I_2$.  We note that some of these arcs may be degenerate (length 0, just a
	singleton) and that our methods account for this. 
	In other words, we have 
	$$[x,y]=[x_0,x_1]\cup[x_1,x_2]\cup\ldots\cup[x_{n-1},x_n]=[x_0,x_n]$$
	with $[x_{2i},x_{2i+1}]\in I_1$ and $[x_{2i+1},x_{2i+2}]\in I_2$. Essentially, we have
	divided $[x_0,x_n]$ into segments which go between 
	two different horoballs (again, possibly with length 0) and 
	segments which traverse individual horoballs. We should mention
	that the subdivision used here has a parity which suggests that
	the terminal points $x_0$ and $x_n$ must have depth 0, but that this is easily
	surmounted. Simply attach a vertical segment to a depth 0 vertex whenever necessary.
		
	We have the following expression:
	
	$$\hat{d}_1(x_0,x_n)=\sum_{i=0}^{n-1}\hat{d}(x_i,x_{i+1})=\sum_{I\in I_1}\mathrm{length}(I)+\sum_{I\in I_2}\mathrm{length}(I)$$
	
	\begin{figure}[b]
	\centering
	\includegraphics[width=0.8\textwidth]{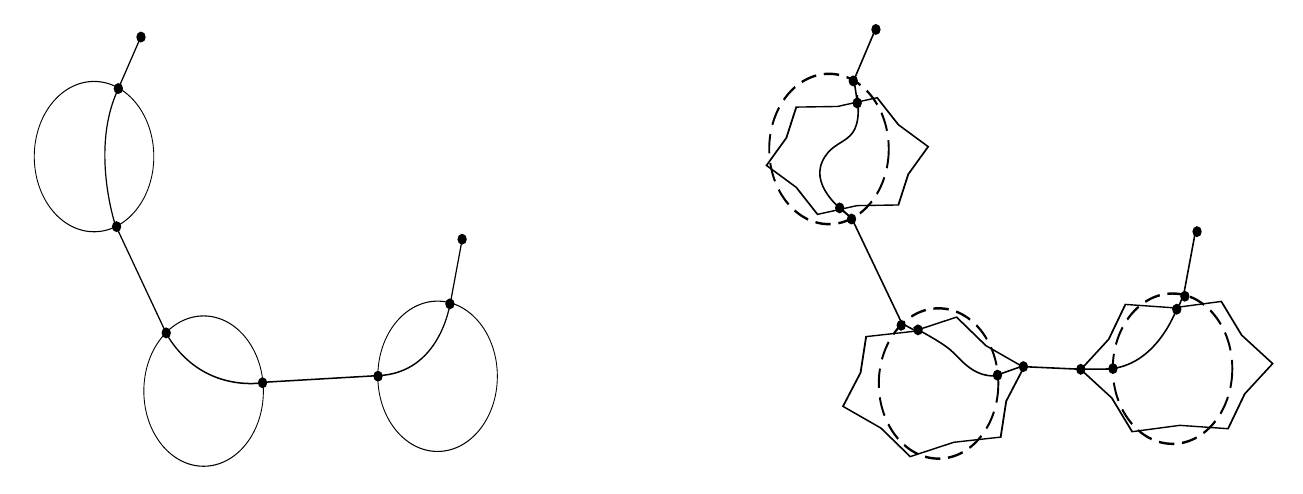}
	\caption{A typical geodesic in $X(\gG_1)$ and a reconstructed piecewise geodesic in $X(\gG_2)$.}\label{f:recongeod}
	\end{figure}
	
	We construct a path in $\gG_2$ which tracks the image of $[x_0,x_n]$. For each $x_i,x_{i+1}$,
	we take any geodesic in $X(\gG_2)$, $[Q(x_i),Q(x_{i+1})](=[q(x_i),q(x_{i+1})])$, see Figure \ref{f:recongeod}. 
	Because $q$ is a quasi-isometry between
	Cayley graphs, $d_1=\hat{d}_1$ for any segment in $I_1$, 
	and $d_2\geq \hat{d_2}$, we get the following estimate on the lengths of
	the images of endpoints of segments of $I_1$:
\begin{align}
	\sum_{i=0}^{(n-1)/2}\hat{d}_2(q(x_{2i}),q(x_{2i+1}))&\leq
	\sum_{i=0}^{(n-1)/2}d_2(q(x_{2i}),q(x_{2i+1}))\nonumber\\
	\leq\sum_{i=0}^{(n-1)/2}[kd_1(x_{2i},x_{2i+1})+c]
	&=\sum_{i=0}^{(n-1)/2}[k\hat{d}_1(x_{2i},x_{2i+1})+c]\nonumber
\end{align}
	By Lemma \ref{l:horoballqi}, we know that the horoballs paired by $q$ are 
	quasi-isometric and, by \cite{Dr1}, that the constants can be chosen uniformly.
	If we let $\Lambda-2T$ be the maximum among those constants and $k,c$,
	we get the following length estimate:
	
\begin{align}
	\hat{d}_2(q(x_0),q(x_n))&\leq\sum_{i=0}^{(n-1)/2}\hat{d}_2(q(x_{2i}),q(x_{2i+1}))&+&\sum_{i=1}^{(n-1)/2}\hat{d}_2(q(x_{2i-1}),q(x_{2i}))\\
	&\leq\sum_{i=0}^{(n-1)/2}[\Lambda\hat{d}_1(x_{2i},x_{2i+1})+\Lambda]&+&\sum_{i=1}^{(n-1)/2}[\Lambda\hat{d}_1(x_{2i-1},x_{2i})+\Lambda]\nonumber
\end{align}

	Now, since we know that the horoball-transversals have length $\geq1$, we
	can move the additive constant $\Lambda$ from the first sum to the second sum,
	except for a single summand.
	We do this to account for scenarios in which a geodesic is constructed from
	several paths contained entirely in horoballs, making the first sum $\sum\Lambda$.
\begin{align}
  \hat{d}_2(q(x_0),q(x_n))&\leq  \Lambda + \sum_{i=0}^{(n-1)/2}[\Lambda\hat{d}_1(x_{2i},x_{2i+1})]&+&\sum_{i=1}^{(n-1)/2}[\Lambda \hat{d}_1(x_{2i-1},x_{2i})+2\Lambda]\nonumber \\
   &\leq \Lambda + \sum_{i=0}^{(n-1)/2}[\Lambda\hat{d}_1(x_{2i},x_{2i+1})]&+&\sum_{i=1}^{(n-1)/2}[3\Lambda\hat{d}_1(x_{2i-1},x_{2i})] \nonumber \\
   &\leq \Lambda + 3\Lambda\left[\sum_{i=0}^{(n-1)/2}[\hat{d}_1(x_{2i},x_{2i+1})]\right.&+&\left.\sum_{i=1}^{(n-1)/2}[\hat{d}_1(x_{2i-1},x_{2i})]\right] \nonumber \\
   &=3\Lambda\hat{d}_1(x_0,x_n)+\Lambda 
\end{align}
	
	We still need to establish coarse density. However, this is clear for depth 0 vertices
	since the Cayley graphs are quasi-isometric and it is clear for the positive depth
	vertices since the horoballs are in bijection and this pairing is by quasi-isometries.
	
	We also need to produce the other bound, but by a symmetric argument using the
	quasi-inverse $r$,
	$$\hat{d_1}(r(x_0),r(x_n))\leq 3\Lambda'\hat{d}_2(x_0,x_n)+\Lambda'$$
	Therefore, 
	
	$$\hat{d_1}(r(q(x_0)),r(q(x_n)))\leq3\Lambda'\hat{d_2}(q(x_0),q(x_n))+\Lambda' 
	$$ $$\leq 3\Lambda'[3\Lambda\hat{d_1}(x_0,x_n)+\Lambda]+\Lambda' = 
	9\Lambda\Lambda'\hat{d_1}(x_0,x_n)+3\Lambda\Lambda'+\Lambda$$
	
	Because $q$ and $r$ are quasi-inverse, there exists an $a>0$ such that
	
	$$\hat{d_1}(x_0,x_n) \leq \hat{d_1}(r(q(x_0)),r(q(x_n)))+a$$
	Combining these, we get
	
	$$\frac{1}{3\Lambda'}\hat{d_1}(x_0,x_n) - \frac{a+\Lambda'}{3\Lambda'}\leq \hat{d_2}(r(x_0),r(x_n))\leq 3\Lambda'\hat{d_1}(x_0,x_n) + \Lambda'$$
	We conclude by maximizing among constants.
\end{proof}

As indicated previously, our proof of Theorem \ref{t:cuspqi} simplifies to prove an analogous result for the coned space.
\begin{thm}\label{t:coneqi}
	Let $\gG_1,\gG_2,\cA,q$ be as above. Then there exists a collection of subgroups $\cB$
	of $\gG_2$ such that the coned spaces of $(\gG_1,\cA)$ and $(\gG_2,\cB)$ are
	quasi-isometric.
\end{thm}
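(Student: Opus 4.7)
The plan is to mirror the proof of Theorem \ref{t:cuspqi}, replacing horoballs with cone vertices and exploiting the fact that any path traversing a single cone contributes at most $2$ to its length. This collapses the most substantial part of the cusped-space argument, namely Lemma \ref{l:horoballqi}, into an essentially trivial bookkeeping observation.

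First I would invoke \cite[Theorem 5.12]{Dr1} together with the hypothesis that no $A \in \cA$ is properly relatively hyperbolic, exactly as in the proof of Theorem \ref{t:cuspqi}, to produce a collection $\cB$ of subgroups of $\gG_2$ and a coarse pairing of cosets of $\cA$ in $\gG_1$ with cosets of $\cB$ in $\gG_2$, with uniform quasi-isometry constants and uniform Hausdorff-distance bound $T$. I would then extend $q$ to a map $Q$ from the coned space of $(\gG_1,\cA)$ to that of $(\gG_2,\cB)$ by setting $Q = q$ on $\mathrm{Cay}(\gG_1)$ and sending each cone vertex $v_{gA}$ to the cone vertex attached to the coset of $\cB$ paired with $gA$.

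Next I would decompose a geodesic $[x_0,x_n]$ in the coned space of $(\gG_1,\cA)$ into the alternating segments $I_1$ lying entirely in $\mathrm{Cay}(\gG_1)$ and segments $I_2$ of the form $[u, v_{gA}, w]$ that cross a single cone vertex, just as in the proof of Theorem \ref{t:cuspqi}. The $I_1$-contributions to $\hat d_2(Q(x_0), Q(x_n))$ are controlled by the quasi-isometry constants of $q$ directly. Each $I_2$-segment has length exactly $2$; its endpoints $u,w$ are sent by $q$ to points within distance $T$ of the coset paired with $gA$, so they can be joined through the corresponding cone vertex by a path of length at most $2 + 2T$. Summing yields a linear upper bound on $\hat d_2(Q(x_0), Q(x_n))$ in terms of $\hat d_1(x_0,x_n)$.

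The reverse inequality follows by applying the same argument to a quasi-inverse of $q$ and composing, exactly as in Theorem \ref{t:cuspqi}. Coarse density of $Q$ is immediate from coarse density of $q$ together with the bijection between cone vertices. I anticipate no substantial obstacle beyond the work already done for the cusped case: the horoball-specific logarithmic estimates that drove the earlier argument are entirely bypassed here, and the only nontrivial input is the coset pairing from \cite{Dr1}, used identically.
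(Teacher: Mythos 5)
Your proposal is correct and follows essentially the same route as the paper: the paper's own proof simply adjusts the argument of Theorem \ref{t:cuspqi} by replacing the intra-horoball arcs with length-$2$ arcs through the cone points and modifying the length estimate accordingly. Your additional bookkeeping of the $2+2T$ bound for the cone-crossing segments is a harmless (and slightly more explicit) elaboration of the same idea.
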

\begin{proof}
	Adjust the proof of the previous theorem by replacing the intra-horoball arcs with
	arcs through the cone-points. These all have length 2 so simply change (2) to 
	$$\hat{d}_2(q(x_0),q(x_n))\leq\sum_{i=0}^{(n-1)/2}\hat{d}_2(q(x_{2i}),q(x_{2i+1}))+\sum_{i=1}^{(n-1)/2}2$$
\end{proof}

\begin{cor}\label{c:boundaryhomeo}
	With $(\gG_1,\cA_1)$ and $(\gG_2,\cA_2)$ as in Theorem \ref{t:cuspqi}, 
		the cusped spaces $X(\gG_1,\cA_1)$
		and $X(\gG_2,\cA_2)$ have homeomorphic boundaries.
\end{cor}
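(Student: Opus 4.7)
The plan is to deduce the corollary directly from Theorem \ref{t:cuspqi} and the standard extension theorem for quasi-isometries between proper geodesic hyperbolic spaces.

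First I would record the three structural properties of the cusped spaces that make this extension available. Each $X(\gG_i, \cA_i)$ is a locally finite graph (the Cayley graph is locally finite, each combinatorial horoball $\cH(gA)$ is locally finite in its $1$-skeleton, and only finitely many horoballs meet any vertex of $\gG_i$), so it is a proper geodesic metric space. By the hypothesis that $(\gG_1, \cA_1)$ is relatively hyperbolic in the sense of Definition \ref{d:cspace}, the space $X(\gG_1, \cA_1)$ is $\gd_1$-hyperbolic. Since Gromov hyperbolicity is a quasi-isometry invariant of proper geodesic spaces, Theorem \ref{t:cuspqi} forces $X(\gG_2, \cA_2)$ to be $\gd_2$-hyperbolic for some $\gd_2$, which in particular confirms that $(\gG_2, \cA_2)$ is a bona fide relatively hyperbolic pair in the Groves--Manning sense so that its Bowditch boundary $\partial(\gG_2, \cA_2) = \partial X(\gG_2, \cA_2)$ is defined.

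Next, I would invoke the classical boundary extension theorem for quasi-isometries between proper $\gd$-hyperbolic geodesic spaces (see, e.g., \cite{BrHa1}, Theorem III.H.3.9): any quasi-isometry $Q : Y_1 \to Y_2$ between such spaces extends continuously to a homeomorphism $\partial Q : \partial Y_1 \to \partial Y_2$ of Gromov boundaries, where each boundary carries the visual topology. Applying this to the quasi-isometry $Q : X(\gG_1, \cA_1) \to X(\gG_2, \cA_2)$ produced by Theorem \ref{t:cuspqi} yields a homeomorphism $\partial Q : \partial X(\gG_1, \cA_1) \to \partial X(\gG_2, \cA_2)$, which is the desired homeomorphism of Bowditch boundaries.

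There is essentially no obstacle, as the corollary is a packaging of Theorem \ref{t:cuspqi} with well-known machinery; the only point requiring a modicum of care is the justification that $X(\gG_2, \cA_2)$ is hyperbolic and proper, both of which follow at once from Theorem \ref{t:cuspqi} and from the explicit construction of the cusped space in Definition \ref{d:cspace}.
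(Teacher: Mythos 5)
Your proposal is correct and is exactly the argument the paper has in mind: the paper states this corollary with no written proof, treating it as an immediate consequence of Theorem \ref{t:cuspqi} together with the standard fact that a quasi-isometry of proper geodesic hyperbolic spaces extends to a homeomorphism of Gromov boundaries. Your added checks (properness/local finiteness of the cusped space, hyperbolicity of $X(\gG_2,\cA_2)$ via quasi-isometry invariance) are the right details to supply and introduce no new ideas beyond what the paper implicitly relies on.
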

\begin{cor}\label{c:treesqi}
	With $(\gG_1,\cA_1)$ as in Theorem \ref{t:cuspqi}, 
	the trees describing the maximal peripheral splitting \cite{Bo3} and
	the cut-point/cut-pair tree \cite{PaSw1} for the boundary of the cusped space are
	quasi-isometry invariant.
\end{cor}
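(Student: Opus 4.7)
The plan is to reduce the statement to the already-established fact that a quasi-isometry $q:\gG_1\to\gG_2$ induces a homeomorphism of Bowditch boundaries, and then observe that both of the trees in question are intrinsic topological invariants of the boundary.

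First, given $q$, Theorem \ref{t:cuspqi} supplies a peripheral structure $\cA_2$ on $\gG_2$ together with a quasi-isometry $X(\gG_1,\cA_1)\to X(\gG_2,\cA_2)$ of cusped spaces. Since both cusped spaces are Gromov hyperbolic, this quasi-isometry extends to a homeomorphism $\widehat{q}:\partial(\gG_1,\cA_1)\to\partial(\gG_2,\cA_2)$; this is exactly Corollary \ref{c:boundaryhomeo}.

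Next I would observe that the combined tree $\cT$ reviewed in Section 3 is constructed purely from the topology of the boundary. Its vertices correspond to cut points, inseparable cut pairs, necklaces, and equivalence classes under $\sim$; edges record closures-intersecting incidence among these sets. Each of these notions (being a cut point, being a cut pair, being a necklace, being inseparable) is preserved by any homeomorphism of the ambient continuum, as is the incidence relation used to splice the cut-point and cut-pair trees together. Hence $\widehat{q}$ takes cut points of $\partial(\gG_1,\cA_1)$ bijectively to cut points of $\partial(\gG_2,\cA_2)$, cut pairs to cut pairs, necklaces to necklaces, and classes of $\sim$ to classes of $\sim$; the induced bijection on vertices extends to a simplicial isomorphism $\cT(\partial(\gG_1,\cA_1))\to\cT(\partial(\gG_2,\cA_2))$.

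The same argument handles Bowditch's maximal peripheral splitting: in \cite{Bo3} that tree is built from the cut-point structure of $\partial\gG$ together with the parabolic fixed points, and the parabolic fixed points are characterized topologically (they are precisely the points whose stabilizer in the convergence action is infinite and fixes no other boundary point, and equivalently they are detected by Lemma \ref{l:horoballboundary} as limits of horoballs, i.e.\ topological features of $\partial X(\gG,\cA)$). Thus $\widehat{q}$ again takes the defining data of one tree to that of the other and induces a simplicial isomorphism.

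The only genuine content beyond Corollary \ref{c:boundaryhomeo} is the verification that each ingredient of the two tree constructions is a topological invariant of the boundary; once that is checked, quasi-isometry invariance is automatic. The mildly delicate point---and the one I would write out carefully---is the peripheral-splitting case, since one must argue that parabolic fixed points are intrinsic to $\partial(\gG,\cA)$ rather than depending on the chosen presentation; this follows from the convergence dynamics and the hypothesis on $\cA_1$ that rules out refinements, so there is no real obstacle.
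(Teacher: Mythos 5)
Your argument is correct and is precisely the reasoning the paper intends: the corollary is deduced immediately from Corollary \ref{c:boundaryhomeo} together with the observation that both trees are constructed purely from topological data of the boundary (cut points, cut pairs, necklaces, and their incidences), all of which are preserved by the induced homeomorphism. Your additional care about parabolic fixed points being intrinsic to the boundary is a reasonable elaboration of a point the paper leaves implicit, but it does not change the approach.
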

\begin{cor}\label{c:gensets}
	Let $\gG$ be a group hyperbolic relative to $\cA$ with finite compatible generating sets 
	$S$ and $T$.
	Then $X(\gG,S,\cA)$ and $X(\gG,T,\cA)$ are quasi-isometric. The
	analogous result for the coned space also holds.
	\end{cor}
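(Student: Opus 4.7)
The plan is to derive this as an immediate consequence of Theorem \ref{t:cuspqi} (respectively Theorem \ref{t:coneqi}) applied to the identity map on $\gG$. First, because $S$ and $T$ are both finite generating sets of the same group $\gG$, there is a constant $K$ depending only on how each generator of one set is written as a word in the other, such that the identity map $\mathrm{id}\colon\gG\to\gG$ is a $(K,0)$-quasi-isometry $\mathrm{Cay}(\gG,S)\to\mathrm{Cay}(\gG,T)$. This is the standard observation that the quasi-isometry type of $\mathrm{Cay}(\gG,\cdot)$ does not depend on the choice of finite generating set.

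Next, I would invoke Theorem \ref{t:cuspqi} with $\gG_1=\gG_2=\gG$, peripheral structure $\cA_1=\cA$, and $q=\mathrm{id}$. This produces a peripheral collection $\cA_2$ in $\gG$ and a quasi-isometry $X(\gG,S,\cA)\simeq X(\gG,T,\cA_2)$. The key point is that the collection $\cA_2$ constructed in the proof of Theorem \ref{t:cuspqi} is obtained by tracking the images under $q$ of cosets of elements of $\cA_1$, appealing to \cite[Theorem 5.12]{Dr1}. When $q$ is the identity, each coset of $A\in\cA$ is sent (at bounded Hausdorff distance) to itself, so the natural choice is $\cA_2=\cA$. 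Thus the horoball pairing supplied by Lemma \ref{l:horoballqi} becomes the identity pairing of combinatorial horoballs, and the resulting quasi-isometry is simply the identity on vertices, whose quasi-isometry constants come from the two distinct edge metrics on $X(\gG,\cdot,\cA)$.

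The coned-space analogue follows identically by substituting Theorem \ref{t:coneqi} for Theorem \ref{t:cuspqi}; the proof of that theorem, being a formal simplification of the proof of Theorem \ref{t:cuspqi}, again has the property that the output peripheral collection for $\gG_2=\gG$ and $q=\mathrm{id}$ can be taken to be $\cA$.

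The only step that requires any thought is justifying that $\cA_2$ may be identified with $\cA$ in the application above; once this bookkeeping is noted, no further work is required beyond citing Theorems \ref{t:cuspqi} and \ref{t:coneqi}.
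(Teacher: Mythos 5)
Your overall strategy --- apply Theorem \ref{t:cuspqi} (resp.\ Theorem \ref{t:coneqi}) to the identity map and observe that, since the identity sends each coset of each $A\in\cA$ to itself, the output peripheral collection can be taken to be $\cA$ itself --- is exactly the route the paper takes. However, there is one point you overlook, and it happens to be the only point the paper's own proof bothers to address: Corollary \ref{c:gensets} does \emph{not} assume that the elements of $\cA$ fail to be properly relatively hyperbolic, whereas Theorem \ref{t:cuspqi} does. So you cannot simply ``invoke Theorem \ref{t:cuspqi} with $q=\mathrm{id}$'' as a black box; its hypotheses are not satisfied in the generality in which the corollary is stated. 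One must instead inspect the proof of the theorem and note that the hypothesis on peripheral subgroups is used only to guarantee, via \cite[Theorem 5.12]{Dr1}, that $q$ induces a coarsely surjective pairing of peripheral cosets; for the identity map this pairing is automatic (each coset goes to itself), so the hypothesis is not needed. Your own observation that ``each coset of $A\in\cA$ is sent (at bounded Hausdorff distance) to itself'' is precisely the needed ingredient --- you just deploy it to settle the bookkeeping question of whether $\cA_2=\cA$ rather than the logically prior question of whether the theorem's argument applies at all.

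To repair the write-up, replace ``invoke Theorem \ref{t:cuspqi}'' with ``run the proof of Theorem \ref{t:cuspqi},'' and state explicitly that the only use of the non-properly-relatively-hyperbolic hypothesis is to produce the coset bijection, which the identity map supplies for free. With that sentence added, your argument matches the paper's.
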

\begin{proof} We only need to show that we can drop the condition on peripheral subgroups.
Inspecting the proof of Theorem \ref{t:cuspqi}, the requirement that no peripheral subgroup is properly
relatively hyperbolic is used to establish a bijection between cosets of peripheral subgroups from
\cite{Dr1}. Because peripheral cosets will
be mapped to themselves under the identity map, this bijection is automatic and the
hypothesis is unnecessary. \end{proof}
We note that this corollary is also new for hyperbolic groups with a non-trivial 
relatively hyperbolic structure.

\section{Vertex Stabilizers}
The first step towards identifying the vertex stabilizers is establishing
the quasi-convexity of vertex groups.

\begin{lem}\label{l:qcvg}
	Let $\gG$ be finitely presented and one-ended. Additionally suppose that
	$(\gG,\cA)$ is relatively hyperbolic with $\cA$ finite such that no $A\in\cA$ is properly relatively
	hyperbolic and no $A$ contains an infinite torsion subgroup. Let $\cT$ be the combined tree from
	the boundary. If $\gG_v$ is a vertex group of $\cT$ then $\gG_v$ is relatively quasi-convex.
\end{lem}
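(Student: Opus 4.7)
The proof splits into cases by the four types of vertices enumerated after Theorem \ref{t:simptree}. Cut-point stabilizers (type 1) fix a boundary point; under the convergence action of $\gG$ on $\partial\gG$, a point fixed by an infinite subgroup is either a conical limit point or a bounded parabolic point, and a cut point — being topologically distinguished and fixed by its full stabilizer — is forced to be parabolic. The stabilizer is therefore contained in a peripheral subgroup and so is relatively quasi-convex. For inseparable cut-pair stabilizers (type 2), passing to the index-$\leq 2$ pointwise stabilizer and invoking the convergence-group classification of subgroups with two fixed points, one obtains a subgroup which is finite, two-ended hyperbolic (relatively quasi-convex by Theorem \ref{t:2endsrq}), or contained in a single peripheral (using that no $A\in\cA$ is properly relatively hyperbolic to rule out degenerate cases) — all elementary, hence relatively quasi-convex.

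For necklaces (type 3) and blocks (type 4) the plan is to apply the dynamical characterization of relative quasi-convexity: $H\leq \gG$ is relatively quasi-convex iff $H$ acts on its limit set $\Lambda(H)\subseteq\partial\gG$ as a geometrically finite convergence group whose parabolic fixed points are parabolic for $\gG$. The vertex stabilizer $\gG_v$ preserves the closure in $\partial\gG$ of the necklace (respectively, the block), and I will identify this closed subcontinuum with $\Lambda(\gG_v)$. The convergence property is inherited from the ambient action of $\gG$ on $\partial\gG$, so the real work is in verifying geometric finiteness.

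The main obstacle is precisely this verification of geometric finiteness for the necklace and block cases. Parabolic fixed points of $\gG_v$ are automatically parabolic for $\gG$ and hence peripheral; the key points to check are (i) that the $\gG_v$-stabilizer of each such point has finite index in the full $\gG$-stabilizer — this I will deduce from the observation that a peripheral subgroup meeting $\overline N$ (resp.\ $\overline B$) in more than a single point would produce extra cut-pair structure inside $N$ (resp.\ $B$), contradicting the necklace/block definitions — and (ii) that there are finitely many $\gG_v$-orbits of parabolic points in $\overline N$ or $\overline B$, which follows from cocompactness of the $\gG_v$-action on the star of $v$ in $\cT$ (using minimality from Lemma \ref{l:minimal}) together with the correspondence between peripheral points of the boundary piece and edges of $\cT$ incident to $v$. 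The hypothesis that peripheral subgroups contain no infinite torsion enters to ensure that the standard equivalences between definitions of geometric finiteness apply to $\gG_v$, yielding relative quasi-convexity in all four cases.
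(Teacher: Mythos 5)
Your treatment of the first two vertex types agrees with the paper: peripheral (cut-point) stabilizers are relatively quasi-convex essentially by definition, and inseparable cut-pair stabilizers are hyperbolic two-ended, hence strongly relatively quasi-convex by Theorem \ref{t:2endsrq}. For necklaces and maximal inseparable sets, however, you take a genuinely different route --- the dynamical characterization of relative quasi-convexity via geometric finiteness of the action of $\gG_v$ on its limit set --- whereas the paper argues directly in the Cayley graph: it observes that $\cT$ is bipartite with every neighbour of $v$ a cut point or cut pair, lists the finitely many incident edge groups $Z_1,\dots,Z_n$ (hyperbolic two-ended) and $P_1,\dots,P_m$ (peripheral), and decomposes any geodesic between points of $\gG_v$ into subsegments that either lie in $\gG_v$ or run through a coset of some $Z_i$ or $P_j$; the $Z_i$-segments fellow-travel $\gG_v$ by strong relative quasi-convexity and the $P_j$-excursions are irrelevant to \emph{relative} quasi-convexity.

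The gap in your version is that you never address the conical limit point condition, which is the substantive half of geometric finiteness and precisely where relative quasi-convexity lives. Your items (i) and (ii) concern only the parabolic points of $\Lambda(\gG_v)$: even granting that each such point is bounded parabolic for $\gG_v$ and that there are finitely many orbits of them, you must still show that \emph{every other} point of the closed necklace (resp.\ block) is a conical limit point \emph{for the subgroup} $\gG_v$. This is not inherited from the ambient action --- a point can be conical for $\gG$ while failing to be conical for a subgroup containing it in its limit set; that failure is exactly how non-quasi-convex subgroups arise --- so omitting it leaves the argument establishing nothing beyond what holds for an arbitrary subgroup preserving $\overline{N}$. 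A second unproven assertion is the identification $\overline{N}=\Lambda(\gG_v)$ (resp.\ $\overline{B}=\Lambda(\gG_v)$): you need the orbit of $\gG_v$ to accumulate on all of the closed necklace or block, which again requires some form of cocompactness that is not supplied. Both issues are sidestepped by the paper's geodesic-decomposition argument, which never needs to analyze the limit set at all; I would recommend either supplying a genuine proof of conicality (e.g.\ by first establishing the coarse-geometric control that the paper's argument provides, at which point the dynamical detour becomes unnecessary) or adopting the paper's direct approach.
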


\begin{proof}
	This is clearly true for vertex groups which are peripheral. It is also true for hyperbolic two-ended
	vertex groups by
	Theorem \ref{t:2endsrq}. 
	Assume $\gG_v$ is not of these types.
	
	$\cT$ is a bipartite graph in which all vertices of one color have corresponding groups which
	are either peripheral or hyperbolic two-ended. To see this, note that maximal inseparable sets and necklaces must only be adjacent to cut pairs or points. Furthermore, cut pairs and points must come between these larger sets. Now, let $\{Z_1,\ldots,Z_n\}$ and 
	$\{P_1,\ldots, P_m\}$ be the
	collection of all hyperbolic two-ended subgroups and the 
	collection of all peripheral subgroups incident to $\gG_v$, respectively.

	Let $\gga$ be any geodesic between points in $\gG_v$. Decompose $\gga$ into maximal segments
	of length $\geq 1$ which have endpoints in cosets of some $Z_i$,
	$P_j$ or are contained completely within $\gG_v$. The segments contained in each $Z_i$ must stay within a
	bounded distance of $\gG_v$ because hyperbolic two-ended subgroups are
	strongly relatively quasi-convex (Theorem \ref{t:2endsrq}). Additionally the $P_j$ segments
	stay within bounded distance of $P_j$ \cite[Theorem 4.21]{Dr1}. Since they are peripheral and we are investigating
	relative quasi-convexity, we are not concerned
	with how far they stray from $P_j\cap \gG_v$ inside $P_j$.
\end{proof}

\begin{prop}\label{p:necklaceQH}
	With $\gG,\cA,\cT$ as in Lemma \ref{l:qcvg}, 
	a vertex group $\gG_v$ of 
	$\cT$, is relatively QH
	with finite fiber if and only if $\gG_v$ is the stabilizer of a necklace in $T$.
\end{prop}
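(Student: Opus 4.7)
The plan is to prove both implications. The forward direction builds a circle from the cyclic structure of the necklace and applies the Casson--Jungreis--Gabai--Tukia theorem; the reverse direction eliminates the three other vertex types of $\cT$.

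For the forward direction, let $N$ be a necklace with stabilizer $\gG_v$. Maximality of $N$ furnishes a canonical cyclic order together with a family of continua $M_i$ meeting consecutively in cut pairs, and collapsing each $M_i$ to a point in the closure of $N \subset \partial\gG$ produces a topological quotient $S^1_N$ homeomorphic to $S^1$. Since $\gG_v$ preserves the cyclic structure, it descends to an action of $\gG_v$ on $S^1_N$ by homeomorphisms, and the convergence action of $\gG$ on $\partial\gG$ descends to a convergence action of $\gG_v$ on $S^1_N$ (the attractor and repeller of any convergence subsequence in $\gG_v$ lie in $\overline{N}$ and project to $S^1_N$). By Theorem \ref{t:convergence}, the image of $\gG_v$ in $\mathrm{Homeo}(S^1_N)$ is Fuchsian, giving the exact sequence $1 \to F \to \gG_v \to \pi_1(\cO) \to 1$ of Definition \ref{d:QH}(1). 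A standard argument, combined with Lemma \ref{l:uniformbound} applied to the fact that $F$ (up to an index-two subgroup) fixes pointwise the endpoints of arbitrarily many inseparable cut pairs, shows that $F$ is finite. Conditions (2) and (3) of Definition \ref{d:QH} are then verified by analyzing vertices adjacent to $v$ in $\cT$: incident edges come from cut pairs inside $N$ (whose two-ended stabilizers descend to boundary subgroups of $\pi_1(\cO)$) or from cut points in $\overline{N}$ (whose peripheral stabilizers descend to parabolic or boundary subgroups); any conjugate of an element of $\cA$ meeting $\gG_v$ must fix a point in $\overline{N}$ and so descends to a subgroup with a fixed point on $S^1_N$.

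For the reverse direction, I rule out the other three vertex types of $\cT$. Cut-point stabilizers are peripheral and inseparable-cut-pair stabilizers are two-ended (Lemma \ref{l:qcvg}); neither can fit into an exact sequence with $\pi_1(\cO)$ for a hyperbolic $2$-orbifold and finite kernel, since peripherals are not properly relatively hyperbolic by hypothesis and two-ended groups are too small to admit such a quotient. Rigid vertex stabilizers---equivalence classes of $\sim$ not containing any cut pair---are universally elliptic in $\cT$, but a QH vertex group is flexible in the sense of Definition \ref{d:flexible}, since essential simple closed curves on $\cO$ furnish incompatible splittings. Hence $\gG_v$ must be a necklace stabilizer.

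The main obstacle will be the construction of $S^1_N$ and the verification that the convergence property descends through the quotient: the $M_i$ may be nontrivial continua, so one must track where attractors and repellers of a convergence sequence land after collapsing, and one must argue that the resulting action on $S^1_N$ is genuinely a convergence action rather than merely an action by homeomorphisms.
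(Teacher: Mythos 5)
Your direction ``necklace stabilizer $\Rightarrow$ relatively QH with finite fiber'' follows essentially the same route as the paper: the cyclic order gives an action on a circle, the convergence property forces the kernel to be finite, and Theorem \ref{t:convergence} supplies the Fuchsian quotient. (The paper outsources the circle action to \cite[Theorem 22]{PaSw1} rather than building $S^1_N$ by collapsing the $M_i$, but that is a cosmetic difference.) Note, however, that your verification of conditions (2) and (3) of Definition \ref{d:QH} is only asserted at exactly the point where the paper has to work: one must rule out an incident edge group being carried by a non-boundary-parallel geodesic $\gga\in\cC$, which the paper does via Lemma \ref{l:cpsepcp} in the peripheral case and, in the two-ended case, by showing an interlocked pair $\{x_1,x_2\}$ would then fail to be a cut pair of $\partial\gG$.

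The genuine gap is in your other direction. You prove ``relatively QH $\Rightarrow$ necklace stabilizer'' by eliminating the three remaining vertex types, and the elimination of the rigid type rests on the claim that a relatively QH vertex group is flexible because essential simple closed curves on $\cO$ furnish incompatible splittings. This fails for small orbifolds: a pair of pants, a thrice-punctured sphere, or any hyperbolic $2$-orbifold carrying no pair of intersecting essential non-boundary-parallel simple closed curves yields a relatively QH group with finite fiber that is \emph{not} flexible in the sense of Definition \ref{d:flexible}, so the dichotomy ``rigid vertices are universally elliptic, QH vertices are not'' does not separate the two cases. (You would also need to prove, not merely assert, that maximal-inseparable-set stabilizers are universally elliptic.) The paper avoids this entirely by a direct construction: relative quasi-convexity of $\gG_v$ (Lemma \ref{l:qcvg}, via \cite{MaPe1}) embeds $\partial\pi_1(\cO)$ into $\partial\gG$ as a set $\cN$, and conditions (2) and (3) of Definition \ref{d:QH} are then used to show that the endpoints of each non-boundary-parallel bi-infinite geodesic remain a cut pair in all of $\partial\gG$ --- the point being that everything else attached to $\gG_v$ hangs off boundary components and therefore cannot bridge across such a pair. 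That step is the real content of this implication, and the elimination strategy never engages with it.
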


\begin{proof}
	If $\gG_v$ is relatively QH with finite fiber, then by Definition \ref{d:QH} there is a short exact sequence
	$$1\to F \to \gG_v \to \pi_1(\cO)\to 1$$
	with $F$ finite and $\cO$ a hyperbolic orbifold.
	We first determine that $\gG_v\not\in\cA$. Because $F$ is finite,
	$\gG_v\simeq_{qi}\pi_1(\cO)$ and because $\cO$ is a hyperbolic
	2-orbifold, $\gG_v$ is itself hyperbolic relative to $\{1\}$.
	By our condition on peripheral subgroups, $\gG_v\not\in\cA$.
	
	Now, by definition $\gG_v$ is virtually Fuchsian.
	Let $\cC$ be the set of bi-infinite curves in the universal cover
	$\widetilde{\cO}$ which are not homotopic to a
	boundary component of $\widetilde{\cO}$. Since $F$ is finite, $\gG_v$ is quasi-isometric
	to $\pi_1(\cO)$ and so $\partial \gG_v\simeq\partial\pi_1(\cO)$ with respect to the relatively hyperbolic structure induced by $\cA$. Call this set $N$. Let $\cN$ be
	the image of $N$ in $\partial\gG$ induced by the inclusion of $\gG_v$ in $\gG$. This
	map is well-defined because of relative quasi-convexity of $\gG_v$. Specifically,
	by \cite{MaPe1}, relative quasi-convexity is equivalent to quasi-convexity in 
	the cusped space and by general results on $\gd$-hyperbolic spaces the boundary
	will embed.
	
	We claim that $\cN$ is a necklace in $\partial\gG$. By definition, 
	every edge group must be either
	finite or contained in a boundary component. Because $\gG$ is one-ended, the
	finite case is excluded. Consequently, for any $\gga\in\cC$ each coset of an edge 
	group is contained in a single component of $\widetilde\cO\setminus \gga$. Let $\eta\in\cC$ be any
	curve which has an essential crossing with $\gga$. Such $\eta$ exists because if not then $\gga$ must be boundary parallel, but $\cC$ contains
	no boundary parallel curves. 
	
	Since $\eta^+$
	and $\eta^-$ are in different components of $N\setminus\{\gga^+,\gga^-\}$ and
	each edge group is attached to only a single boundary component, it must be
	that every edge group has image contained in either the same component of 
	$N\setminus\gga$ as $\eta^+$ or $\eta^-$, but never both. Thus, the image of
	$\{\gga^+,\gga^-\}$ also separates the image of $\{\eta^+,\eta^-\}$ in $\cN$
	and the endpoints of $\gga$ form a cut pair in $\partial\gG$.
	
	\begin{figure}[t]\label{f:QH} \includegraphics{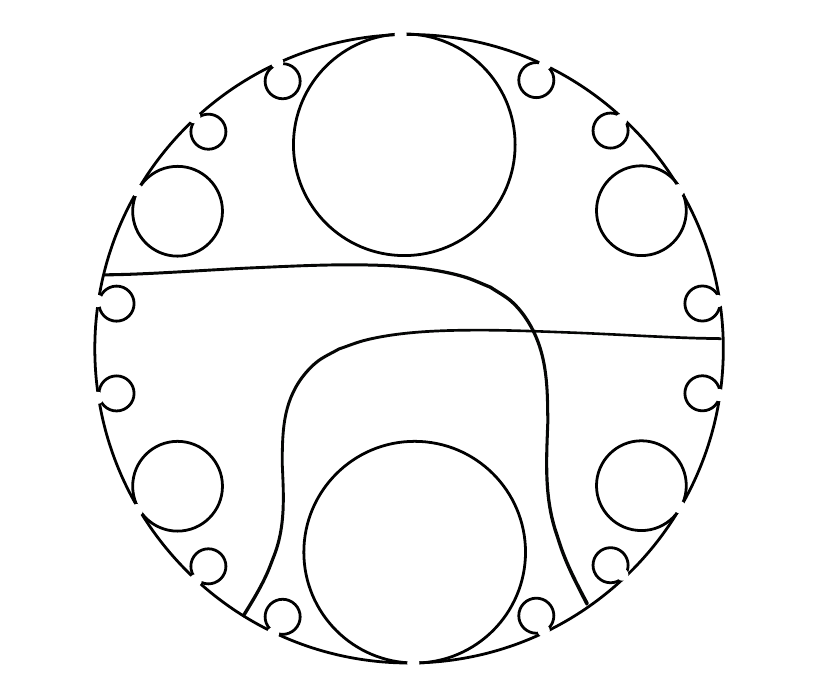}
	\caption{Interlocking geodesics separate boundary components.}	
	\end{figure}
	In the reverse direction, if $\gG_v$ stabilizes a necklace $\cN$ then, 
	by the last paragraph of the proof of \cite[Theorem 22]{PaSw1}, 
	it has an action on $S^1$ which preserves
	the cyclic order. Since $\gG_v=\mathrm{Stab}(\cN)$, $\gG_v$ inherits the convergence
	property of $\gG$ and this property must be realized on $\cN$. 
	Any sequence of group elements contained in the kernel of this
	action is not a convergence sequence, so the kernel must be finite. 
	The fiber, $F$, is the kernel of this action. By Theorem \ref{t:convergence},
	the quotient must be a Fuchsian group. Let $\cO$ be the quotient
	of $\bbH^2$ by the action of $\gG_v/F$,
	truncating cusps so that $\cO$ is compact. We are left with showing
	that edge groups are boundary parallel.
	
	First, we show that no element of $\cC$ (again defined as the set of non-boundary 
	bi-infinite curves on $\cO$ - those which cross other bi-infinite curves and interlock in 
	$\partial N$) can be contained in the image
	of a peripheral edge group. Because
	peripheral subgroups have a unique boundary point (Lemma \ref{l:horoballboundary}), such 
	a curve would induce a cut point in $\cN$.
	However, by Lemma \ref{l:cpsepcp} no cut pair can be separated by a cut point.
	In this context, every cut pair separates another cut pair with the only
	exception arising from those cut pairs which are end points 
	of boundary curves of $\widetilde\cO$. In other words, for every cut pair $C$ of $\cN$
	there is an interlocked cut pair unless $C$ forms the endpoints in
	$\widetilde{\cO}$ of a curve homotopic to a boundary component of $\cO$.
	
	Now we are left with only the possibility that some $\gga\in\cC$ is
	identified with a hyperbolic two-ended edge group. 
	Let $\{x_1,x_2\}$ be any cut pair interlocked with $\{\gga^+,\gga^-\}$.
	We claim that $\{x_1,x_2\}$ is not actually a cut pair in $\partial\gG$.
	$\partial\gG\setminus\{\gga^+,\gga^-\}$ must have at least 3 components
	in this situation. Let $Y$ be a component which does not contain any
	points of $\cN$. In particular, $Y\cap\cN=\{\gga^+,\gga^-\}$.
	Thus, no cut pair of $\cN$ separates $\gga^+$ from $\gga^-$ as
	both are contained in the component $Y$. However, then $\overline{Y}$
	must separate $\{x_1,x_2\}$, contrary to Lemme \ref{l:cpsepcp}.
	Thus, we must have that no such $\gga$ exists in $\cC$.
	
	\end{proof}

\begin{lem}\label{l:cp2end}
	If $\{x,y\}$ is an inseparable cut pair in $\partial\gG$ then $\mathrm{Stab}(\{x,y\})$ 
	is a hyperbolic two-ended subgroup of $\gG$.
\end{lem}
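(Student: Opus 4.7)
The plan is to combine the convergence-group analysis of cut-pair stabilizers already invoked in Lemma~\ref{l:minimal} with Lemma~\ref{l:horoballboundary}, exploiting the definitional requirement that a cut pair contain no cut point.

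Set $H := \mathrm{Stab}(\{x,y\})$. Any element of $H$ either fixes both points of $\{x,y\}$ or swaps them, so the subgroup $H_0 := \mathrm{Stab}(x) \cap \mathrm{Stab}(y)$ has index at most $2$ in $H$. By the convergence-group dichotomy (\cite{Tu1, GeMa1}), the setwise stabilizer of a pair of distinct points under a convergence action is either finite or virtually infinite cyclic; this is the fact standing behind the assertion in Lemma~\ref{l:minimal}. Since $\{x,y\}$ is an inseparable cut pair, \cite{Pa1} produces a non-trivial splitting of $\gG$ whose edge group is contained in $H$, and one-endedness of $\gG$ then forces $H$ to be infinite. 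Thus $H$ is two-ended, and $H_0$ is also infinite.

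Next I would rule out the possibility that $H$ is conjugate into a peripheral subgroup. Suppose $H \leq A^g$ for some $A \in \cA$ and $g \in \gG$. By Lemma~\ref{l:horoballboundary} the horoball boundary point $e_{A^g}$ is the unique fixed point of $A^g$ in $\partial\gG$, so $H_0 \leq A^g$ fixes $e_{A^g}$. Since $H_0$ already fixes $x$ and $y$, and an infinite subgroup of a convergence group cannot fix three distinct points of the boundary, we must have $e_{A^g} \in \{x,y\}$. However $e_{A^g}$ is a cut point of $\partial\gG$ by Bowditch's theorem \cite{Bo5} (already invoked in the proof of Theorem~\ref{t:JSJ} to produce cut points from peripheral splittings), and Definition~\ref{d:cpcp} forbids a cut pair from containing any cut point. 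This contradiction shows that $H$ contains a hyperbolic element, so combined with the previous paragraph $H$ is hyperbolic two-ended.

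The main potential obstacle is the clean invocation of the convergence-group facts — in particular the observation that an infinite subgroup of a convergence group cannot fix three distinct points — but this follows directly from applying the definition of a convergence sequence to any infinite sequence in such a subgroup and noting that the attracting/repelling pair produced by the sequence must be contained in the common fixed set.
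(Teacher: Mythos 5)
Your first two paragraphs reproduce the paper's (very terse) argument: $\mathrm{Stab}(\{x,y\})$ is infinite because otherwise one-endedness fails, and the convergence dichotomy for stabilizers of pairs of distinct points then gives virtually cyclic. That part is fine. The problem is in your third paragraph, where you try to upgrade ``two-ended'' to ``hyperbolic two-ended'': the step ``$e_{A^g}$ is a cut point of $\partial\gG$ by Bowditch's theorem'' is false in general. Bowditch's result in \cite{Bo5} runs in the other direction --- every global cut point is a parabolic fixed point --- and the converse fails badly: for $\pi_1$ of a cusped finite-volume hyperbolic $3$-manifold the Bowditch boundary is $S^2$, which has no cut points at all, yet it is full of parabolic fixed points. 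So that contradiction does not materialize, and as written the exclusion of the peripheral case is not established.

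The fix is easy and makes the third paragraph unnecessary: the infinite branch of the dichotomy you already quoted does not merely say ``virtually infinite cyclic,'' it produces a loxodromic element whose fixed pair is exactly $\{x,y\}$, and loxodromic elements of the action on the Bowditch boundary are precisely the hyperbolic elements of $(\gG,\cA)$. Equivalently, if $H_0\leq A^g$ were infinite, then $H_0$ would be a parabolic subgroup with unique fixed point $e_{A^g}$ (every infinite subgroup of a peripheral subgroup has limit set the single point $e_{A^g}$ and fixes no other point), so it could not fix the two distinct points $x$ and $y$; no appeal to cut points is needed. With that replacement your argument is complete --- and in fact more careful than the paper's own proof, which asserts ``two-ended'' from the convergence property and leaves the ``hyperbolic'' qualifier implicit.
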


\begin{proof}
	Let $Z=\mathrm{Stab}(\{x,y\})$. $Z$ must be infinite else $\gG$ would not be
	one-ended. As a subgroup of $\gG$, 
	$Z$ acts on $\partial\gG$ with the convergence property.
	Since $Z$ fixes $\{x,y\}$, this implies that $Z$ is two-ended.
\end{proof}

\begin{cor}\label{c:correspondence}
	With $\gG,\cA,\cT$ as in Lemma \ref{l:qcvg}, there is a correspondence between vertex groups of $\cT$
	tree and the topological features of the boundary given by Table 2.
\begin{table}
\caption{Subgroups which stabilize particular topological features in the boundary.}

	\centering
	\begin{tabular}{r c l} \hline\hline Stabilizer & &Topological Feature\\ \hline
	hyperbolic 2-ended&$\longleftrightarrow$& cut-pair \\
	peripheral&$\longleftrightarrow$& cut-point \\
	relatively QH with finite fiber&$\longleftrightarrow$& necklace \\
	\end{tabular}	
\end{table}

\end{cor}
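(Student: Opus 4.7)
The plan is to verify each of the three rows of Table 2 in turn, drawing on the results already proved in this section. Two of the three rows are essentially given; the third requires a short additional argument via the convergence property.

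For the necklace row, I would simply cite Proposition \ref{p:necklaceQH}, which already establishes both directions: a vertex group $\gG_v$ is relatively QH with finite fiber if and only if $v$ corresponds to a necklace in $\partial\gG$. For the inseparable cut pair row, Lemma \ref{l:cp2end} gives that $\mathrm{Stab}(\{x,y\})$ is hyperbolic two-ended. For the reverse direction, if $\gG_v$ is hyperbolic two-ended then Theorem \ref{t:2endsrq} guarantees strong relative quasi-convexity, and by the same appeal to \cite{Pa1} used inside the proof of Theorem \ref{t:simptree}, the two endpoints of (a finite-index infinite cyclic subgroup of) $\gG_v$ form an inseparable cut pair, which is precisely the vertex of $\cT$ stabilized by $\gG_v$.

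For the cut point row, I would argue in both directions using the convergence action. If $v$ corresponds to a cut point $x$, then since $\gG$ is one-ended and the peripheral subgroups satisfy the hypotheses of Lemma \ref{l:qcvg}, the characterization of cut points in \cite{Bo5} shows that $x$ must be a parabolic fixed point, and hence $\mathrm{Stab}(x)$ is peripheral. Conversely, any peripheral subgroup $P$ has a unique fixed point $e_P\in\partial\gG$ by Lemma \ref{l:horoballboundary}, and $e_P$ is a cut point because $\gG$ splits over $P$ via the peripheral splitting of \cite{Bo5}, which forces a topological separation at $e_P$.

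The main obstacle I expect is making sure the stabilizer really equals the full subgroup of the claimed type, rather than merely sitting inside it. For peripheral subgroups this is handled by the maximality of the peripheral structure, which follows from the hypothesis that no $A\in\cA$ is properly relatively hyperbolic (together with \cite[Corollary 7.3]{MaOgYa1}); for hyperbolic two-ended subgroups it follows from the convergence action on $\partial\gG$, since any group fixing both points of a cut pair while containing an infinite-order hyperbolic element must be virtually cyclic. Once this is addressed, the three rows together exhaust the vertex types of $\cT$ (other than the maximal inseparable sets, which are not listed in the table), completing the correspondence.
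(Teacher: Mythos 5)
Your proposal is correct and follows essentially the same route as the paper, which simply cites strong relative quasi-convexity of hyperbolic two-ended subgroups (so their limit pairs embed and separate), the correspondence between cut points and peripheral splittings from \cite{Bo5}, and Proposition \ref{p:necklaceQH} for the necklace row; your version just fills in more detail, including the worthwhile observation that the stabilizers are the \emph{full} subgroups of the stated types. (One trivial slip: the appeal to \cite{Pa1} for producing cut pairs from two-ended splittings occurs in the proof of Theorem \ref{t:JSJ}, not Theorem \ref{t:simptree}.)
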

\begin{proof}
	Since hyperbolic 2-ended subgroups are strongly relatively quasi-convex \cite{Os1},
	their boundaries embed \cite{Hr1}. Similarly, \cite{Bo5} demonstrates that cut points
	correspond to peripheral splittings. As vertex groups, these features must be separating.
	The last point is Proposition \ref{p:necklaceQH}.
	\end{proof}

\noindent With this in place, we are ready to show:

\begin{thm}\label{t:lastthm}
		Let $\gG_1$ and $\gG_2$ be finitely generated groups. Suppose additionally 
		that $\gG_1$ is one-ended and hyperbolic relative to the finite collection $\cA_1$ 
		of subgroups such that no $A\in\cA$ is
		properly relatively hyperbolic or contains an infinite torsion subgroup. 
		Let $\cT$ be the cut-point/cut-pair tree
		of $\partial(\gG_1,\cA_1)$. If $f:\gG_1\to\gG_2$ is a quasi-isometry then
		\begin{itemize}
			\item $T$ is the cut-point/cut-pair tree for $\gG_2$ with respect to the peripheral structure
			induced by Theorem \ref{t:cuspqi},
			\item if $\text{Stab}_{\gG_1}(v)$ is one of the following types then $\text{Stab}_{\gG_2}(v)$ is of the same type,
			\begin{enumerate}
				\item hyperbolic 2-ended,
				\item peripheral,
				\item relatively QH with finite fiber.
			\end{enumerate}
		\end{itemize}
	\end{thm}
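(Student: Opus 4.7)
The plan is to reduce the theorem entirely to the machinery already assembled. First I would invoke Theorem \ref{t:cuspqi} with the quasi-isometry $f$ to produce a peripheral structure $\cA_2$ on $\gG_2$ together with a quasi-isometry $X(\gG_1,\cA_1)\to X(\gG_2,\cA_2)$. Corollary \ref{c:boundaryhomeo} then yields an equivariant (with respect to the natural actions) homeomorphism $\varphi:\partial(\gG_1,\cA_1)\to\partial(\gG_2,\cA_2)$. Since the construction of the cut-point/cut-pair tree from a continuum depends only on the topological notions of cut point, inseparable cut pair, maximal inseparable set, and necklace (all of which are preserved by $\varphi$), the two trees coincide under $\varphi$. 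Both $\gG_1$ and $\gG_2$ therefore act on the single tree $\cT$, which establishes the first bullet point.

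For the second bullet, I would check that the hypotheses of Lemma \ref{l:qcvg} and Corollary \ref{c:correspondence} hold for the pair $(\gG_2,\cA_2)$. One-endedness transfers across quasi-isometries. By Drutu's results used in the proof of Theorem \ref{t:cuspqi}, each element of $\cA_2$ is quasi-isometric to an element of $\cA_1$, and the coarsely surjective matching of cosets ensures that no element of $\cA_2$ is properly relatively hyperbolic (this is precisely how $\cA_2$ was defined). The absence of infinite torsion subgroups in $\cA_2$ is not needed for Corollary \ref{c:correspondence} itself, but only to guarantee local connectedness of $\partial(\gG_2,\cA_2)$; since that boundary is homeomorphic to $\partial(\gG_1,\cA_1)$, which is locally connected by the torsion hypothesis on $\cA_1$ together with \cite[Theorem 1.5]{Bo5}, this is automatic.

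With these hypotheses verified for $(\gG_2,\cA_2)$, Corollary \ref{c:correspondence} applies to both groups. A vertex $v$ of $\cT$ corresponds to a distinguished topological feature $F_v$ of the common boundary (a cut point, inseparable cut pair, or necklace), and by the correspondence $\mathrm{Stab}_{\gG_i}(v)=\mathrm{Stab}_{\gG_i}(F_v)$ is of the type dictated by $F_v$: peripheral for a cut point, hyperbolic two-ended for an inseparable cut pair, and relatively QH with finite fiber for a necklace. Since $F_v$ is the same topological object for both groups (via $\varphi$), $\mathrm{Stab}_{\gG_1}(v)$ and $\mathrm{Stab}_{\gG_2}(v)$ belong to the same class.

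The step I expect to require the most care is verifying that $\cA_2$ really does inherit from $\cA_1$ the property that no element is properly relatively hyperbolic. This is implicit in the construction inside the proof of Theorem \ref{t:cuspqi}, but it must be stated explicitly here to license the application of Corollary \ref{c:correspondence} to the pair $(\gG_2,\cA_2)$; in particular, one must be sure that coarse surjectivity of the induced quasi-isometry between each paired peripheral coset—rather than mere quasi-isometric embedding—has been secured, which is exactly the content of the hypothesis that no $A\in\cA_1$ is properly relatively hyperbolic.
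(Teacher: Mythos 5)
Your proposal is correct and takes essentially the same route as the paper: apply Theorem \ref{t:cuspqi} and Corollary \ref{c:boundaryhomeo} to identify the boundaries (and hence the cut-point/cut-pair trees), then use Corollary \ref{c:correspondence} to read off the vertex-stabilizer types from the corresponding topological features. Your explicit verification that the hypotheses (no properly relatively hyperbolic peripherals, one-endedness, local connectedness) transfer to $(\gG_2,\cA_2)$ is more careful than the paper's two-sentence argument, but it is the same proof.
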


\begin{proof}
	By Corollary \ref{c:boundaryhomeo}, there exists a relatively hyperbolic structure for $\gG_2$ such that
	the boundaries of the cusped spaces are homeomorphic. Since $T$ depends only
	on the topology of this continuum, $\cT$ is the cut-point/cut-pair tree for $\gG_2$.
	
	By the correspondence given in Corollary \ref{c:correspondence}, these vertex types depend only
	on the topology of the boundary. Since these topological features are preserved,
	the vertex group types are preserved as well. 
	
\end{proof}

We conclude this section with a consequence of the fact that $\mathrm{Out}(\gG)$ acts on
$\partial\gG$ by homeomorphisms.

\begin{cor}\label{c:outinv}
	Let $(\gG,\cA)$ be relatively hyperbolic with no $A\in\cA$ properly relatively
	hyperbolic. Then, 
	the $\mathrm{Out}(\gG)$ action on the JSJ-deformation space over elementary
	subgroups relative to peripheral subgroups fixes $T$.
\end{cor}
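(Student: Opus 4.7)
The plan is to combine Theorem~\ref{t:lastthm} with the uniqueness of the maximal peripheral structure. Every $\phi\in\mathrm{Out}(\gG)$ lifts to an automorphism $\tilde\phi\in\mathrm{Aut}(\gG)$, which is a $(1,0)$-quasi-isometry of the Cayley graph with respect to any fixed finite generating set (or bi-Lipschitz after changing generating sets). The goal is to show that $\tilde\phi$ induces a $\gG$-equivariant (in the twisted sense) simplicial self-isomorphism of $\cT$, which is exactly what it means for $\cT$ to be a fixed point of the $\mathrm{Out}(\gG)$-action on the deformation space.

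First I would verify that $\tilde\phi$ preserves the peripheral structure up to conjugacy. Here I invoke \cite[Corollary 7.3]{MaOgYa1}: the set of relatively hyperbolic structures on $\gG$ is partially ordered, and the hypothesis that no $A\in\cA$ is properly relatively hyperbolic pins down $\cA$ as the unique maximal element. Since any automorphism carries a maximal relatively hyperbolic structure to a maximal one, $\tilde\phi(\cA)$ and $\cA$ agree up to conjugation. Consequently $\tilde\phi$ may be viewed as a quasi-isometry of pairs $(\gG,\cA)\to(\gG,\cA)$.

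Next I would apply Theorem~\ref{t:cuspqi} to $\tilde\phi$ and use Corollary~\ref{c:boundaryhomeo} to obtain an induced self-homeomorphism $\partial\tilde\phi\colon \partial\gG\to\partial\gG$ satisfying $\partial\tilde\phi(g\cdot x)=\tilde\phi(g)\cdot\partial\tilde\phi(x)$. Because cut points, inseparable cut pairs, and necklaces are purely topological features of the continuum $\partial\gG$, the homeomorphism $\partial\tilde\phi$ permutes them, and therefore induces a simplicial isomorphism $\cT\to\cT$ equivariant in the same twisted sense. By Corollary~\ref{c:correspondence} the types of vertex stabilizers are preserved, so the $\gG$-tree $\cT$ and the $\gG$-tree $\phi\cdot\cT$ (obtained by precomposing the action with $\tilde\phi$) have the same elliptic subgroups, placing them at the same point of the deformation space.

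The main obstacle is precisely the first step, where \cite[Corollary 7.3]{MaOgYa1} is essential: absent the maximality of $\cA$, the automorphism $\tilde\phi$ could in principle shuffle $\gG$ into a different peripheral structure, which would force us to compare $\cT$ against a cut-point/cut-pair tree built from a different boundary and break the identification. Everything else is a transparent consequence of the topological nature of the construction of $\cT$ and the quasi-isometric naturality established in Theorem~\ref{t:cuspqi}.
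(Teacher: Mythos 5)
Your proposal is correct and follows essentially the same route as the paper: both arguments rest on the uniqueness of the maximal peripheral structure from \cite[Corollary 7.3]{MaOgYa1} to rule out a change of relatively hyperbolic structure, then pass to the induced boundary homeomorphism (via Theorem \ref{t:cuspqi} and Corollary \ref{c:boundaryhomeo}) and observe that $\cT$ is built purely from topological features of $\partial\gG$, so it is carried to itself. Your write-up simply makes explicit the twisted equivariance and the deformation-space bookkeeping that the paper leaves implicit.
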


\begin{proof}
	The action passes to an action on the boundary by homeomorphisms which 
	induces an action on the combined tree so that
	vertex groups map to vertex groups and adjacencies are preserved. Furthermore,
	because maximal relatively hyperbolic structures are unique \cite{MaOgYa1},
	there is no change in the choice of peripheral structure.
\end{proof}

As demonstrated in Theorem \ref{t:cuspqi}, the action of $\cQ\cI(\gG)$ on the Cayley graph of $\gG$
induces an action on the cusped space $X(\gG,\cA)$ and, because the boundary is invariant under quasi-isometries, on $\partial(\gG)$. As stated in Corollary \ref{c:treesqi}, this naturally 
yields an action on the tree $\cT$.
\section{Splitting $\cQ\cI(\gG)$ with $\cT$}
Our last section describes how to understand $\cQ\cI(\gG)$ from the action it inherits on
$\cT(\gG)$.
\subsection{Faithfulness}
\begin{thm}\label{QIfaith}
	With $\gG,\cA,\cT$ as in Lemma \ref{l:qcvg}, the action of $\cQ\cI(\gG)$ on $\cT(\gG,\cA)$ is faithful, assuming that $\cT$ is not
	a point.
\end{thm}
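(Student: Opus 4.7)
My plan is to pass to the induced action on $\partial\gG$ and invoke boundary rigidity for relatively hyperbolic groups. If $[q] \in \cQ\cI(\gG)$ lies in the kernel of the action on $\cT$, I would choose a representative quasi-isometry $q$; by Theorem \ref{t:cuspqi} and Corollary \ref{c:boundaryhomeo} we get an induced homeomorphism $\hat{q}\colon \partial\gG \to \partial\gG$, and since $\cT$ is built directly from the topology of $\partial\gG$, this $\hat{q}$ must preserve each vertex of $\cT$ as a subset. The goal is then to show first that $\hat{q} = \mathrm{id}_{\partial\gG}$, and second that this forces $[q] = [\mathrm{id}]$ in $\cQ\cI(\gG)$.

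The rigidity direction I would handle via a standard Morse-lemma argument. Assuming $\hat{q}$ is the identity on $\partial\gG$, fix any $x \in \gG$ and choose three bi-infinite geodesics in $X(\gG,\cA)$ passing within bounded distance of $x$ and having pairwise distinct pairs of endpoints in $\partial\gG$. Since $\hat{q}$ fixes all six endpoints, the $q$-images are quasi-geodesics with the same endpoints as the originals, so the Morse lemma keeps each image inside a uniform tubular neighborhood of the corresponding original geodesic; the intersection of three such neighborhoods around $x$ is bounded, which yields that $q$ is at bounded distance from the identity.

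The substantive step is to verify $\hat{q} = \mathrm{id}_{\partial\gG}$. The set $\mathrm{Fix}(\hat q)$ is closed and contains every cut point individually, because cut points appear as singleton vertices of $\cT$. Under the assumption that no peripheral subgroup is properly relatively hyperbolic or contains an infinite torsion subgroup, the results of \cite{Bo5} identify cut points with parabolic fixed points; when $\cA$ is non-trivial these are dense in $\partial\gG$, so $\mathrm{Fix}(\hat q) = \partial\gG$ follows immediately. In the Gromov-hyperbolic subcase where $\cT$ is still non-degenerate, there exist inseparable cut pairs or cut pairs within necklaces, and the $\gG$-orbit of any such cut pair has endpoints dense in $\partial\gG$ by minimality of the convergence action; each such cut pair is fixed setwise by $\hat{q}$. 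To promote setwise fixing to pointwise fixing I would either exhibit two $\gG$-translates of a cut pair sharing an endpoint (forcing that shared endpoint to be individually fixed), or pass to $\hat{q}^2$, which automatically fixes each cut pair pointwise, conclude $\hat{q}^2 = \mathrm{id}$ by density, and then rule out $\hat{q}$ being a non-trivial involution by reinvoking triviality of the action on $\cT$ together with the cyclic order on necklaces.

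The hardest step will be this setwise-to-pointwise promotion in the Gromov-hyperbolic subcase; resolving it requires simultaneously exploiting the density of $\gG$-translated cut-pair endpoints and the global constraint that \emph{every} vertex of $\cT$ is fixed, in order to rule out a coherent swap of all cut pairs in an orbit by $\hat{q}$.
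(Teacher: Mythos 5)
Your overall route --- pass to the induced homeomorphism $\hat q$ of $\partial\gG$, show $\hat q=\mathrm{id}$, then recover $[q]=[\mathrm{id}]$ by a Morse-lemma argument --- is genuinely different from the paper's proof, which never leaves the cusped space: there, triviality of the action on $\cT$ forces $\phi$ to coarsely preserve each coset of each edge group, the fattened neighborhoods of two distinct such cosets intersect in a uniformly bounded set, so any point lying near two of them is moved a bounded amount, and coarse density of such ``corners'' finishes the proof. Your plan could in principle work, but as written it has two genuine gaps. First, the density step for cut points is justified incorrectly: \cite{Bo5} shows that every cut point is a parabolic fixed point, not conversely, so density of parabolic points (which holds whenever $\cA$ is nontrivial) says nothing about density of cut points --- there may be no cut points at all even when $\cA\neq\emptyset$, namely when there is no peripheral splitting. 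What you actually need, and what is true, is that if one cut point exists then its $\gG$-orbit consists of cut points and is dense by minimality of the convergence action on the limit set; since $\mathrm{Fix}(\hat q)$ is closed and contains this orbit, the conclusion follows.

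Second, and more seriously, the setwise-to-pointwise promotion for cut pairs, which you yourself flag as unresolved, is exactly the content of the theorem in the case where there are no cut points, and neither of your suggestions closes it. For option (a): an inseparable cut pair is the limit set of its two-ended stabilizer (Lemma \ref{l:cp2end}), and in a convergence group two loxodromic elements sharing one fixed point share both, so distinct translates of such a cut pair are disjoint and no two translates share a single endpoint. For option (b): after passing to $\hat q^2$ you must still exclude an involution that swaps the two points of every cut pair in a dense orbit while fixing all vertices of $\cT$, and the ``cyclic order on necklaces'' is unavailable precisely when $\cT$ has no necklace vertices; no argument is offered for the inseparable-cut-pair case. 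Until that step is supplied the proof is incomplete. It is worth noting that the paper's coset-based argument sidesteps this issue entirely: coarse preservation of each coset of each two-ended edge group (via strong relative quasi-convexity, Theorem \ref{t:2endsrq}, or hyperbolic embeddedness) already pins $\phi$ down without ever needing to distinguish the two boundary points of such a subgroup, so you may find it easier to import that mechanism than to complete the boundary-rigidity route.
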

\begin{proof}

	Given a $(k,c)$-quasi-isometry $\phi\in\cQ\cI(\gG)$ which has a trivial induced action on
	$\cT(\gG)$, it must be that $\phi$ coarsely fixes all peripheral subgroups over
	which $G$ splits. For now, assume one such subgroup exists and call this $A$. By the same citation 
	of \cite{Dr1} applied to Theorem
	\ref{t:cuspqi}, $\phi$ must map every coset $gA$ within $N_T(gA)$.
	However, given two adjacent cosets, $hA$ and $shA$, both must
	satisfy this condition. Consider adjacent points $a\in hA$, $b\in shA$. It must be that
	$\phi(a)\in N_T(hA)$ and $\phi(b)\in N_T(shA)$. Further, as 
	$d(a,b)=1$, $d(\phi(a),\phi(b))\leq k+c$.
	Consequently, $\phi(a)\in N_{k+c+T}(shA)$ and $\phi(b)\in N_{k+c+T}(hA)$ and
	$\phi(a),\phi(b)\in N_{k+c+T}(hA)\cap N_{k+c+T}(shA)$. This intersection
	must have a finite diameter which is uniformly bounded over any chosen pair
	of cosets. Therefore, $d(a,\phi(a))$ is bounded by this diameter.
	Since every point can be found within such a neighborhood by an appropriate
	choice of such cosets,
	$\phi$ must be a finite distance from the identity map.
	
	The two-ended case is similar. These subgroups have been shown to satisfy
	a similar divergence property called \emph{hyperbolically embedded}
	\cite{DaGuOs1,Si2} and they are strongly relatively quasi-convex, so
	the argument is nearly identical.
	
	\end{proof}

\begin{figure}[h]
\includegraphics[width=5.6in]{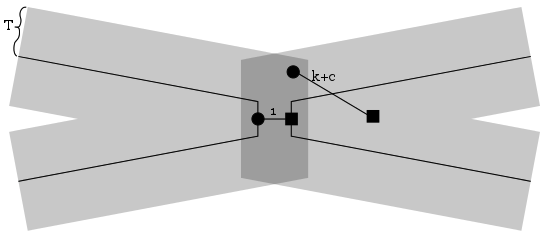}
\caption{$\cQ\cI(\gG)$ acts faithfully on the JSJ tree.}
\end{figure}
\subsection{Bounding the Number of Edges of $\cT/\cQ\cI(\gG)$}
We have that $\gG$ and $\cQ\cI(\gG)$ act on the same tree so that $\cQ\cI$ splits
whenever $\gG$ has a non-trivial JSJ as described here. We can also control the
number of edges which $\cQ\cI$ admits in this induced splitting.
Let $\Lambda$ be $|E(\cT/\gG)|$ and let $\mathrm{Aut}_{qi}(\cT/\gG)$ be the group
of graph automorphisms which respect the quasi-isometry type of each edge and vertex stabilizer.

\begin{thm}\label{t:edges}
The graph of groups decomposition of $\cQ\cI(\gG)$ induced by the JSJ-decomposition of
$\gG$ has at most $\Lambda$ edges and at least $\Lambda / |\mathrm{Aut}_{qi}(\cT/\gG)|$ edges.
\end{thm}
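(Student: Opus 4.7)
The plan is to exploit the fact that $\gG$ embeds (coarsely) into $\cQ\cI(\gG)$ via left multiplication, so that the $\cQ\cI(\gG)$-orbits in $\cT$ refine the $\gG$-orbits. This gives the two bounds quite directly, with the upper bound being essentially formal and the lower bound coming from the observation that $\cQ\cI(\gG)/\gG$ acts on $\cT/\gG$ through QI-preserving symmetries of the quotient graph.

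For the upper bound, I would first observe that the natural map $\gG\to\cQ\cI(\gG)$ (sending $g$ to left multiplication by $g$) is compatible with both actions on $\cT$: the $\gG$-action on $\cT$ factors through its image in $\cQ\cI(\gG)$. Consequently every $\gG$-orbit on vertices and edges of $\cT$ is contained in a $\cQ\cI(\gG)$-orbit, so the quotient $\cT/\cQ\cI(\gG)$ is a further quotient of $\cT/\gG$. In particular the edge count satisfies $|E(\cT/\cQ\cI(\gG))|\le|E(\cT/\gG)|=\Lambda$.

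For the lower bound, I would pass to the induced action of $\cQ\cI(\gG)$ on $\cT/\gG$. Since the image of $\gG$ in $\cQ\cI(\gG)$ lies in the kernel of this action, it descends to a well-defined action of $\cQ\cI(\gG)$ on the finite graph $\cT/\gG$ by graph automorphisms. The key claim is that this action lands in $\mathrm{Aut}_{qi}(\cT/\gG)$, i.e.\ preserves the quasi-isometry type of each edge and vertex stabilizer. Given $\psi\in\cQ\cI(\gG)$ and a vertex $v\in\cT$ with $\gG$-stabilizer $H=\mathrm{Stab}_\gG(v)$, I would argue that $H'=\mathrm{Stab}_\gG(\psi(v))$ is quasi-isometric to $H$: the quasi-action of $\psi$ on $\gG$ sends the orbit $H\cdot 1$ within a bounded Hausdorff distance of a $\gG$-translate of $H'\cdot 1$, and by Lemma \ref{l:qcvg} both $H$ and $H'$ are relatively quasi-convex, so this coarse correspondence restricts to a genuine quasi-isometry $H\to H'$ (with uniform constants depending on $\psi$). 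The same argument applies to edge stabilizers. Therefore the induced homomorphism $\cQ\cI(\gG)\to\mathrm{Aut}(\cT/\gG)$ has image contained in $\mathrm{Aut}_{qi}(\cT/\gG)$, so the number of $\cQ\cI(\gG)$-orbits of edges on $\cT$ is at least $\Lambda/|\mathrm{Aut}_{qi}(\cT/\gG)|$.

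The main obstacle in executing this plan is verifying cleanly that $\psi\in\cQ\cI(\gG)$ induces a genuine quasi-isometry between the two vertex stabilizers $H$ and $H'$, rather than merely a coarse inclusion of one into a neighborhood of the other. This is where the relative quasi-convexity established in Lemma \ref{l:qcvg} is essential: it identifies $H$ and $H'$ with quasi-convex subsets of the cusped space, and a quasi-action of $\psi$ on the cusped space (provided by Theorem \ref{t:cuspqi}) coarsely permutes these subsets. A symmetric argument with a quasi-inverse of $\psi$ gives the reverse direction, producing the required QI. Once this is in hand, the bounds follow from the orbit-counting argument above.
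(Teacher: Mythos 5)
Your proposal follows essentially the same route as the paper's own (very terse) proof: the upper bound comes from the fact that the $\gG$-action on $\cT$ factors through $\cQ\cI(\gG)$, so $\gG$-orbits of edges refine $\cQ\cI(\gG)$-orbits, and the lower bound comes from the induced action on $\cT/\gG$ landing in $\mathrm{Aut}_{qi}(\cT/\gG)$ together with the standard orbit count. In fact you supply more justification than the paper does for the key claim that the induced graph automorphisms preserve the quasi-isometry type of each stabilizer (via relative quasi-convexity and the quasi-action on the cusped space), where the paper simply asserts this because the action is ``filtered through'' the action on $X(\gG,\cA)$.
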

\begin{proof}
First, note that $\gG$ acts
	on itself by quasi-isometries and if $\cT$ is not a point then this
	action will have no global fixed points so obviously the same is true for the
	$\cQ\cI(\gG)$ action.
	
Now, each $g\in\gG$ acts by (quasi)-isometries on
$X(\gG,\cA)$ which gives the upper bound, and $\cQ\cI(\gG)$ acts on $\cT/\gG$ by graph automorphisms which
must preserve the quasi-isometry class of each stabilizer because the action is filtered through the action on $X(\gG,\cA)$, thus giving the lower bound.
\end{proof}

\section{Acknowledgements}
I would like to thank Daniel Groves for his generous support and guidance. I would also like to thank Mark Feighn for pointing out an unstated hypothesis for the peripheral subgroups in Sections 3 and 5, Panos Papazoglou and Eric Swenson for some helpful remarks concerning \cite[Section 5]{PaSw1} and Chris Cashen for his useful comments on my first version.
\bibliography{bib}{}
\bibliographystyle{alpha}

\end{document}